\documentclass[12pt]{amsart}
\usepackage[colorlinks]{hyperref}
\usepackage{amssymb}
\usepackage{amscd}
\usepackage{fullpage}
\usepackage{amsmath}

\numberwithin{equation}{section}
\theoremstyle{plain}
\begin{document}
\newtheorem{theorem}{Theorem}[section]
\newtheorem{conjecture}[theorem]{Conjecture}
\newtheorem{corollary}[theorem]{Corollary}
\newtheorem{definition}[theorem]{Definition}
\newtheorem{lemma}[theorem]{Lemma}
\newtheorem{proposition}[theorem]{Proposition}
\newtheorem{remark}[theorem]{Remark}
\newtheorem{example}[theorem]{Example}


   \newtheorem{thm}{Theorem}[subsection]
   \newtheorem{cor}[thm]{Corollary}
   \newtheorem{lem}[thm]{Lemma}
   \newtheorem{prop}[thm]{Proposition}
   \newtheorem{defi}[thm]{Definition}
   \newtheorem{rem}[thm]{Remark}
   \newtheorem{exa}[thm]{Example}
   
\title[Levin-Cochran-Lee inequalities and best constants on homogeneous groups]{Levin-Cochran-Lee inequalities and best constants on homogeneous groups}

\author{Michael Ruzhansky and Markos Fisseha Yimer}

\address{Department of Mathematics: Analysis, Logic and Discrete Mathematics\\
 Ghent University, Belgium\\ and\\ School of Mathematical Sciences\\Queen Mary University of London\\ United Kingdom\\ {\em E-mail: michael.ruzhansky@ugent.be} }
\address{Department of Mathematics\\ Addis Ababa University, 1176 Addis Ababa, Ethiopia {\em E-mail: markos.fisseha@aau.edu.et} }

\subjclass[2010]{26D10, 22E30, 26D15}
\keywords{Homogeneous Lie group; Levin-Cochran-Lee inequalities; Hardy's inequality; P$\acute{\text{o}}$lya-Knopp's type inequality; maximal integral weighted Hardy inequality; multinomial theorem}


\begin{abstract}
In this paper, we apply a direct method instead of a limit approach, for proving the Levin-Cochran-Lee inequalities. First, we state and prove Levin-Cochran-Lee type inequalities on a homogeneous group $\mathbb{G}$ with parameters $0<p\leq q<\infty$. Furthermore, for the case $p=q$, we prove the sharp inequalities with power weights and derive some other new inequalities.
\end{abstract}

\maketitle

\section{Introduction}\label{sec1}
G. H. Hardy stated and proved in his 1925 paper \cite{H} the following inequality:
\begin{align}\label{H.In.Eq.1.1}
\int_0^\infty \left(\dfrac{1}{x}\int_0^x f(y)\,\mathrm{d}y\right)^p\,\mathrm{d}x\leq \left(\dfrac{p}{p-1}\right)^p \int_0^\infty f^p(x)\,\mathrm{d}x,
\end{align}
for $p>1$, where $f\geq 0$ is a non-negative measurable function on $(0,\infty)$. The inequality is usually called the classical Hardy inequality. Three years later, in 1928, he proved the following generalization of \eqref{H.In.Eq.1.1}:
\begin{align}\label{H.In.Eq.1.2}
\int_0^\infty \left(\dfrac{1}{x}\int_0^x f(y)\,\mathrm{d}y\right)^p x^\alpha\,\mathrm{d}x\leq \left(\dfrac{p}{p-1-\alpha}\right)^p \int_0^\infty f^p(x) x^\alpha\,\mathrm{d}x,
\end{align}
whenever $p\geq 1$ and $\alpha<p-1$. Moreover, the constant $\left(\dfrac{p}{p-1-\alpha}\right)^p$ is sharp (see \cite{H1}). Since then a lot of generalizations and complementary results have been published see e.g. \cite{HLP, KPDP, KMP2, KMP1, KPS, OK} and many references therein. 
Let us mention one of the results which is also considered as the limiting case of the classical Hardy inequality. We note that by replacing $f$ with $f^\frac{1}{p}$ in \eqref{H.In.Eq.1.1} and letting $p\to \infty$, we obtain the following P$\acute{\text{o}}$lya-Knopp inequality
\begin{align}\label{H.In.Eq.1.3}
\int_0^\infty \exp{\left(\dfrac{1}{x}\int_0^x f(t)\,\mathrm{d}t\right)}\, \mathrm{d}x\leq e \int_0^\infty f(x)\, \mathrm{d}x.
\end{align}
The constant $e$ in \eqref{H.In.Eq.1.3} is sharp. Moreover, by making a similar limiting procedure in \eqref{H.In.Eq.1.2} we obtain the following weighted version of \eqref{H.In.Eq.1.3}:
\begin{align*}
\int_0^\infty \exp{\left(\dfrac{1}{x}\int_0^x f(t)\,\mathrm{d}t\right)} x^\alpha\, \mathrm{d}x\leq e^{(1+\alpha)} \int_0^\infty f(x) x^\alpha\, \mathrm{d}x,
\end{align*}
for $\alpha>-1$ and all non-negative measurable functions $f\geq 0$ on $(0,\infty)$. These inequalities have been generalized, complemented and discussed in several publications, see e.g. \cite{CPP, AJL, CL, HKK, MRDS, Y, YPA} and the references given there. In particular, the following exponential weighted inequality was proved by Cochran and Lee in 1984 (see \cite{CL}):
\begin{theorem}
Let $\beta, \gamma$ be real numbers with $\beta>0$. If $\int_0^\infty x^\gamma f(x)\mathrm{d}x<\infty$, then 
\begin{align}\label{H.In.Equ.1.4}
\int_0^\infty x^\gamma\exp{\left(\beta x^{-\beta}\int_0^x t^{\beta-1}\log f(t)\mathrm{d}t\right)}\mathrm{d}x\leq \exp{\left(\dfrac{\gamma+1}{\beta}\right)} \int_0^\infty x^\gamma f(x)\mathrm{d}x,
\end{align}
holds for all positive functions $f$. Moreover, the constant $\exp{\left(\frac{\gamma+1}{\beta}\right)}$ is sharp.
\end{theorem}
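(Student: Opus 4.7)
The plan is to prove the inequality by a direct Jensen argument combined with a one-parameter rescaling trick that produces exactly the sharp constant, bypassing any limiting procedure from the Hardy inequality.

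\textbf{Setting up Jensen.} The decisive observation is that
\[
d\mu_x(t) := \beta x^{-\beta} t^{\beta-1}\,\mathrm{d}t
\]
is a probability measure on $(0,x)$, since $\int_0^x \beta x^{-\beta}t^{\beta-1}\,\mathrm{d}t = 1$. Thus the expression inside the exponential on the left-hand side of \eqref{H.In.Equ.1.4} is $\int_0^x \log f\,d\mu_x$, and Jensen's inequality applied to the convex function $\exp$ would give, pointwise in $x$,
\[
\exp\!\left(\beta x^{-\beta}\!\int_0^x \!t^{\beta-1}\log f(t)\,\mathrm{d}t\right)\le \beta x^{-\beta}\!\int_0^x \!t^{\beta-1} f(t)\,\mathrm{d}t.
\]
However, multiplying by $x^\gamma$ and applying Fubini on the right yields $\int_t^\infty x^{\gamma-\beta}\,\mathrm{d}x$, which diverges unless $\gamma<\beta-1$, and even when finite does not give the sharp constant. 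This mismatch is the main obstacle, and the parameter trick below resolves it.

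\textbf{The parameter trick.} I would introduce a free exponent $a\in\mathbb{R}$ and split $\log f(t)=\log(t^a f(t))-a\log t$. The auxiliary integral
\[
\beta x^{-\beta}\!\int_0^x t^{\beta-1}\log t\,\mathrm{d}t = \log x - \frac{1}{\beta}
\]
(obtained via the substitution $u=t^\beta$) transforms the left-hand side exactly into
\[
x^{-a}\,e^{a/\beta}\,\exp\!\left(\beta x^{-\beta}\!\int_0^x\! t^{\beta-1}\log\bigl(t^a f(t)\bigr)\,\mathrm{d}t\right).
\]
Applying Jensen to the remaining exponential, multiplying by $x^\gamma$, and using Fubini (legitimate provided $a>\gamma-\beta+1$), I obtain
\[
\int_0^\infty x^\gamma \exp(\cdots)\,\mathrm{d}x \le \frac{\beta\, e^{a/\beta}}{a+\beta-\gamma-1}\int_0^\infty t^\gamma f(t)\,\mathrm{d}t.
\]

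\textbf{Optimization and sharpness.} Differentiating $C(a):=\beta e^{a/\beta}/(a+\beta-\gamma-1)$ in $a$ pinpoints the minimum at $a=\gamma+1$, which is admissible precisely because $\beta>0$; evaluating gives $C(\gamma+1)=e^{(\gamma+1)/\beta}$, establishing \eqref{H.In.Equ.1.4}. For sharpness, note that equality in Jensen forces $t^a f(t)$ to be $d\mu_x$-a.e.\ constant, so the formal extremizer corresponds to $f(t)=t^{-(\gamma+1)}$. Since this is not admissible, I would test with the family $f_\varepsilon(t):=t^{-(\gamma+1)-\varepsilon}\chi_{(1,\infty)}(t)$ (or, equivalently, $f(t)=t^{-(\gamma+1)}\chi_{[1,N]}(t)$ with $N\to\infty$), compute both sides explicitly, and verify that the ratio of the left to the right side of \eqref{H.In.Equ.1.4} converges to $e^{(\gamma+1)/\beta}$ as $\varepsilon\to 0^+$, confirming the constant cannot be improved.
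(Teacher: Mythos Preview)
Your proof of the inequality itself is correct and is precisely the paper's direct method specialised to the real line: your free parameter $a$ is the paper's $\alpha$, the identity $\beta x^{-\beta}\int_0^x t^{\beta-1}\log t\,\mathrm{d}t=\log x-1/\beta$ is the one-dimensional form of the paper's identity~\eqref{H.Eq.4.6}, and the Jensen--Fubini--optimise chain is identical.

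The sharpness argument, however, has a genuine gap. Both test functions you propose, $f_\varepsilon(t)=t^{-(\gamma+1)-\varepsilon}\chi_{(1,\infty)}(t)$ and $f(t)=t^{-(\gamma+1)}\chi_{[1,N]}(t)$, vanish on $(0,1)$. Since the left side of \eqref{H.In.Equ.1.4} involves $\log f(t)$ and $\log 0=-\infty$, the inner integral satisfies $\int_0^x t^{\beta-1}\log f(t)\,\mathrm{d}t=-\infty$ for \emph{every} $x>0$, forcing the entire left-hand side to equal zero. The ratio you claim tends to $e^{(\gamma+1)/\beta}$ is therefore identically zero, and nothing about the constant is established. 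The repair is to keep $f$ strictly positive on the inner interval; the paper (in its proof of the homogeneous-group analogue, Theorem~\ref{H.T.4.2}) takes
\[
f(t)=\chi_{[0,\,e^{1/\beta}]}(t)+t^{-\eta}\,\chi_{(e^{1/\beta},\,\infty)}(t),\qquad \eta>\gamma+1,
\]
evaluates both sides in closed form, and lets $\eta\to(\gamma+1)^+$ to obtain $C\ge e^{(\gamma+1)/\beta}$. The particular cutoff $e^{1/\beta}$ is chosen so that the geometric-mean integral over $(0,x)$ collapses neatly for $x>e^{1/\beta}$, but any positive extension on $(0,1]$ would salvage your argument.
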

 Here we want to note that the inequality \eqref{H.In.Equ.1.4} was proved earlier in 1938 by Levin in his paper \cite{L} which was written in the Russian language, and then rediscovered by J. A. Cochran and C.-S. Lee.  This is the main reason why the inequality \eqref{H.In.Equ.1.4} has got the name of Levin-Cochran-Lee type inequality.

\begin{remark}
Recently, Yimer et al. \cite{YPA} studied a general $n$-dimensional analogue of \eqref{H.In.Equ.1.4} on $I_n:=[0,b_1)\times\cdots\times[0,b_n)\subseteq \mathbb{R}_+^n$, for $0<b_i\leq \infty$, ($i=1,...,n$) involving general weight functions with parameters $0<p\leq q<\infty$. Moreover, estimates of the sharp constant were also discussed (see also \cite{MRASBT}).
\end{remark}
For the purpose of this paper we need to mention the following characterization of the general weight functions $\phi$ and $\psi$ on homogeneous groups for a maximal integral weighted Hardy inequality to hold (see \cite[Theorem 5.4.1]{MRDS} and \cite[Theorem 5.1]{MRDSNY}):

\begin{theorem} [Maximal integral weighted Hardy inequality]\label{H.In.T.1.1}
Let $\mathbb{G}$ be a homogeneous group with the homogeneous dimension $Q$ equipped with a homogeneous quasi-norm $|\cdot|$. Let $\phi$ and $\psi$ be positive functions defined on $\mathbb{G}$. Then there exists a constant $C>0$ such that 
\begin{align*}
\int_{\mathbb{G}} \phi(x)\exp{\left(\dfrac{1}{|B(0,{|x|})|}\int_{B(0,{|x|})}\ln f(y)\mathrm{d}y\right)} \mathrm{d}x
    \leq C \int_{\mathbb{G}} \psi(x) f(x)\mathrm{d}x
\end{align*}
holds for all positive $f$ defined on $\mathbb{G}$ if and only if 
\begin{align*}
A:=\sup\limits_{R>0}\, R^Q \int_{|x|\geq R} \dfrac{\phi(x)}{|x|^{2Q}} \exp{\left(\dfrac{1}{|B(0,{|x|})|}\int_{B(0,{|x|})}\ln \left(\dfrac{1}{\psi(y)}\right)\mathrm{d}y\right)}\,   \mathrm{d}x<\infty.
\end{align*}
\end{theorem}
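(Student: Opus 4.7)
The plan is to establish the characterisation by two independent direct arguments: sufficiency by an AM--GM / Fubini argument with a carefully chosen auxiliary weight, and necessity by testing on a one-parameter family of extremal-type functions.

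For \emph{sufficiency}, assume $A<\infty$. For any positive weight $w$ on $\mathbb{G}$, the integral form of AM--GM (Jensen's inequality applied to the concave $\ln$) gives
\begin{equation*}
\exp\!\left(\frac{1}{|B(0,|x|)|}\int_{B(0,|x|)}\ln f\right)\le\frac{1}{|B(0,|x|)|}\int_{B(0,|x|)}f(y)w(y)\,dy\cdot\exp\!\left(-\frac{1}{|B(0,|x|)|}\int_{B(0,|x|)}\ln w\right).
\end{equation*}
I multiply by $\phi(x)$, integrate over $\mathbb{G}$, and swap the order of integration via Fubini--Tonelli using $y\in B(0,|x|)\Leftrightarrow|x|\ge|y|$. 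The polar integration identity on the homogeneous group,
\begin{equation*}
\frac{1}{|B(0,r)|}\int_{B(0,r)}\ln|y|\,dy=\ln r-\frac{1}{Q},
\end{equation*}
then motivates the choice $w(y):=\psi(y)|y|^Q$. With this $w$, the inner $x$-integral reads
\begin{equation*}
\frac{e}{|B(0,1)|}\int_{|x|\ge|y|}\frac{\phi(x)}{|x|^{2Q}}\exp\!\left(\frac{1}{|B(0,|x|)|}\int_{B(0,|x|)}\ln(1/\psi)\right)dx,
\end{equation*}
which, once multiplied by the leftover $|y|^Q$ coming from $fw$, is bounded by $eA/|B(0,1)|$ by the hypothesis on $A$. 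Collecting the constants yields the inequality with $C\le eA/|B(0,1)|$.

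For \emph{necessity}, assume the inequality holds with some constant $C$, and for each $R>0$ test it on $f_R(y):=\psi(y)^{-1}\min(R^{-2Q},|y|^{-2Q})$. The right-hand side reduces to $C\int_{\mathbb{G}}\min(R^{-2Q},|y|^{-2Q})\,dy$, which by radial integration equals $2C|B(0,1)|R^{-Q}$. For the left-hand side, splitting $B(0,|x|)=B(0,R)\sqcup(B(0,|x|)\setminus B(0,R))$ for $|x|>R$ and applying the same logarithmic-average identity, a short calculation gives
\begin{equation*}
\exp\!\left(\frac{1}{|B(0,|x|)|}\int_{B(0,|x|)}\ln f_R\right)=|x|^{-2Q}\,e^{2-2(R/|x|)^Q}\exp\!\left(\frac{1}{|B(0,|x|)|}\int_{B(0,|x|)}\ln(1/\psi)\right),
\end{equation*}
where the middle factor is $\ge 1$ on $\{|x|>R\}$. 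Discarding the nonnegative contribution from $\{|x|\le R\}$ and inserting the RHS estimate yields
\begin{equation*}
R^Q\int_{|x|>R}\frac{\phi(x)}{|x|^{2Q}}\exp\!\left(\frac{1}{|B(0,|x|)|}\int_{B(0,|x|)}\ln(1/\psi)\right)dx\le 2C|B(0,1)|,
\end{equation*}
so taking the supremum in $R$ gives $A\le 2C|B(0,1)|<\infty$.

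The \emph{main obstacle} is guessing the auxiliary weight $w(y)=\psi(y)|y|^Q$ in the sufficiency step. The simplest choice $w=\psi$ is also compatible with Jensen + Fubini, but it produces only the strictly stronger condition $\sup_R\int_{|x|\ge R}\phi(x)|x|^{-Q}\exp(\cdots)\,dx<\infty$, which in general is not implied by $A<\infty$ (indeed $A$ is bounded above by this quantity via the pointwise estimate $R^Q/|x|^Q\le 1$). The extra factor $|y|^Q$ is precisely what trades $|x|^{-Q}$ for the correct $|x|^{-2Q}$ and simultaneously generates the $R^Q$ prefactor appearing in $A$; its exponent $Q$ is forced by requiring scale invariance of the resulting condition in $R$. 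The analogous subtlety on the necessity side is recognising that $\psi^{-1}\min(R^{-2Q},|y|^{-2Q})$ is an approximate extremiser at scale $R$.
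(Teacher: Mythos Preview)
Your proof is correct. Note that the paper does not itself prove this theorem---it is quoted from \cite[Theorem~5.4.1]{MRDS} and \cite[Theorem~5.1]{MRDSNY}---but the paper's own proof of the more general Theorem~\ref{H.T.4.1} specialises (at $\beta=1$, $p=q=1$) to exactly your argument: Jensen with an auxiliary power weight followed by Fubini for sufficiency, and an explicit two-piece radial test function for necessity. Your choice $w(y)=\psi(y)|y|^Q$ is precisely the paper's free parameter $\alpha$ fixed at $\alpha=Q$, which is the value singled out by the statement of $A$; the paper instead keeps $\alpha>0$ free and optimises at the end, which buys the two-sided estimate \eqref{H.Eq.4.3} on the sharp constant but is not needed for the bare characterisation. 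Your necessity test function $f_R(y)=\psi(y)^{-1}\min(R^{-2Q},|y|^{-2Q})$ differs cosmetically from the paper's (which takes $R^{-Q}$ on the ball and $e^{-2}R^{Q}|y|^{-2Q}$ outside), but the computation and the resulting bound $A\le 2C|B(0,1)|$ are the same in spirit.
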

\begin{remark}
Inequalities of the type of those in Theorem \ref{H.In.T.1.1} where $\mathbb{G}$ is replaced by $I_n=[0,b_1)\times\cdots[0,b_n)\subseteq \mathbb{R}_+^n$, with $0<b_i\leq \infty,\,(i=1,...,n)$ and the means are considered over a hyperrectangle, were studied in \cite{Y} for the multidimensional case with $0<p\leq q<\infty$. Moreover, estimates of the sharp constant were also discussed (see also \cite{JH, AW}  and the references therein).
\end{remark}

The main purpose of this paper is to study the Levin-Cochran-Lee type inequality on a homogeneous group $\mathbb{G}$ equipped with a quasi-norm $|\cdot|$ for the case $0<p\leq q<\infty$ using a direct method. The paper is organized as follows: In Section 2 we give some basics on homogeneous groups. In Section 3 we state and prove the general main results (see Theorems \ref{H.T.4.1} and \ref{H.T.5.1}) and we discuss also the sharp inequalities with power weight functions for the case $p=q$ (see Theorems \ref{H.T.4.2} and \ref{H.T.5.2}). Finally, Section 4 is reserved for some concluding remarks.  
\section{Preliminaries}
In this section, we recall the basics of homogeneous groups. For further reading on homogeneous groups and other inequalities on homogeneous groups, we refer to the monographs \cite{FR, FH, MRDS} and references therein. 
\subsection{ Basics on homogeneous Lie groups}

A Lie group $\mathbb{G}$ (identified with $(\mathbb{R}^N, \circ)$) is called a homogeneous Lie group if it is equipped with a dilation mapping
\begin{align*}
D_\lambda: \mathbb{R}^N\to \mathbb{R}^N, \, \lambda>0,
\end{align*}
defined as
\begin{align*}
D_\lambda(x)=(\lambda^{v_1}x_1, \lambda^{v_2}x_2,...,\lambda^{v_N}x_N), v_1,v_2,...,v_N>0,
\end{align*}
which is an automorphism of the group $\mathbb{G}$ for each $\lambda>0$. Here and in the sequel, we will denote the image of $x\in \mathbb{G}$ under $D_\lambda$ by $\lambda(x)$ or, simply $\lambda x$. The homogeneous dimension $Q$ of a homogeneous Lie group $\mathbb{G}$ is defined by 
\begin{align*}
Q=v_1+v_2+\cdots+v_N.
\end{align*}
It is well known that a homogeneous group is necessarily nilpotent and unimodular. There are different particular examples of homogeneous groups such as the Euclidean space $\mathbb{R}^n$ (in which case $Q=n$), the Heisenberg group, as well as general stratified groups (homogeneous Carnot groups) and graded groups.

The Haar measure $\mathrm{d}x$ on $\mathbb{G}$ is nothing but the Lebesgue measure on $\mathbb{R}^N$. \\
Let us denote the volume of a measurable set $\omega\subseteq \mathbb{G}$ by $|\omega|$. Then we have the following consequences: for $\lambda>0$
\begin{align*}
|D_\lambda(\omega)|=\lambda^Q|\omega| \text{ and } \int_{\mathbb{G}} f(\lambda x)\mathrm{d}x=\lambda^{-Q}\int_{\mathbb{G}} f(x)\mathrm{d}x.
\end{align*}
A quasi-norm on $\mathbb{G}$ is any continuous non-negative function $|\cdot|: \mathbb{G}\to [0, \infty)$ satisfying the following conditions:
\begin{enumerate}
    \item[(i)] $|x|=|x^{-1}|$ for all $x\in \mathbb{G}$
     \item[(ii)] $|\lambda x|=\lambda|x|$ for all $x\in \mathbb{G}$ and $\lambda>0$
      \item[(iii)] $|x|=0 \Longleftrightarrow x=0.$
\end{enumerate}
Before we finish this section, we need to mention the following polar decomposition on a homogeneous group $\mathbb{G}$ since it plays an important role in the proofs of our main results.
\begin{theorem} (c.f. e.g. \cite[Proposition 1.2.10]{MRDS})
Let 
\begin{align*}
\mathfrak{S}=\{x\in \mathbb{G}: |x|=1\}\subset \mathbb{G}
\end{align*}
be the unit sphere with respect to the quasi-norm $|\cdot|$. Then there is a unique Radon measure $\sigma$ on $\mathfrak{S}$ such that for all $f\in L^1(\mathbb{G})$, 
\begin{align}\label{H.I.Eq.3}
    \int_{\mathbb{G}} f(x)\mathrm{d}x= \int_0^\infty \int_{\mathfrak{S}} f(ry)r^{Q-1} \mathrm{d}\sigma(y)\mathrm{d}r.
\end{align}
\end{theorem}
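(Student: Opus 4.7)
The plan is to introduce polar coordinates on $\mathbb{G}\setminus\{0\}$ via the map $\Phi:(0,\infty)\times\mathfrak{S}\to \mathbb{G}\setminus\{0\}$ defined by $\Phi(r,y)=D_r(y)=ry$, then construct $\sigma$ so that the Haar measure pushes forward to $r^{Q-1}\,\mathrm{d}r\,\mathrm{d}\sigma(y)$. First I would check that $\Phi$ is a homeomorphism. Surjectivity: for $x\neq 0$, set $r=|x|$ and $y=D_{1/r}(x)$; property (ii) of the quasi-norm gives $|y|=r^{-1}|x|=1$, so $y\in\mathfrak{S}$ and $\Phi(r,y)=x$. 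Injectivity: if $r_1y_1=r_2y_2$ with $y_i\in\mathfrak{S}$, applying $|\cdot|$ and using homogeneity forces $r_1=r_2$, and then $y_1=y_2$ follows by applying $D_{1/r_1}$. Continuity of $\Phi$ and $\Phi^{-1}$ follows from continuity of $|\cdot|$ and of the dilations.

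Next, I would define the candidate measure. For a Borel set $E\subseteq \mathfrak{S}$, let
\[
\widetilde{E}_{a,b}=\{ry: a<r<b,\; y\in E\},\qquad \widetilde{E}:=\widetilde{E}_{0,1},
\]
and set $\sigma(E):=Q\,|\widetilde{E}|$. The dilation property $|D_\lambda(\omega)|=\lambda^Q|\omega|$ gives $\widetilde{E}_{a,b}=D_b(\widetilde{E})\setminus D_a(\widetilde{E})$ and hence $|\widetilde{E}_{a,b}|=(b^Q-a^Q)|\widetilde{E}|$. Countable additivity and finiteness of $\sigma$ on compact subsets of $\mathfrak{S}$ are inherited from those of Haar measure on $\mathbb{G}$, so $\sigma$ is a Radon measure on $\mathfrak{S}$.

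With $\sigma$ in hand, I would verify the identity \eqref{H.I.Eq.3} in stages. For $f=\mathbf{1}_{\widetilde{E}_{a,b}}$, the right-hand side equals
\[
\int_a^b r^{Q-1}\sigma(E)\,\mathrm{d}r=|\widetilde{E}|\,(b^Q-a^Q),
\]
which matches $|\widetilde{E}_{a,b}|$ by the dilation identity above. The $\pi$-system of such "shells" $\widetilde{E}_{a,b}$ generates the Borel $\sigma$-algebra of $\mathbb{G}\setminus\{0\}$, so a standard monotone-class argument extends the equality to $\mathbf{1}_B$ for every Borel $B$. Linearity yields the formula for simple functions, monotone convergence yields it for all non-negative measurable $f$, and linearity again for $f\in L^1(\mathbb{G})$. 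Uniqueness of $\sigma$ follows by running this argument backwards: any $\sigma'$ satisfying \eqref{H.I.Eq.3} must assign $\sigma'(E)=Q|\widetilde{E}|$ for every Borel $E\subseteq\mathfrak{S}$ (take $f=\mathbf{1}_{\widetilde{E}_{0,1}}$).

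The main obstacle I anticipate is the bookkeeping around the construction of $\sigma$ as a genuine Radon measure on $\mathfrak{S}$: one must verify countable additivity on Borel subsets of $\mathfrak{S}$ from additivity of Lebesgue measure on their cones, and check regularity. Both are routine once one notes that $E\mapsto \widetilde{E}$ commutes with countable unions, intersections and set differences, and that the continuity of $\Phi^{-1}$ transfers inner/outer regularity from the Haar measure on $\mathbb{G}\setminus\{0\}$ to $\sigma$ on $\mathfrak{S}$; alternatively one can define $\sigma$ via the Riesz–Markov representation of the positive functional $g\mapsto Q\int_{\widetilde{\mathfrak{S}}\cap\{|x|<1\}} g(x/|x|)\,\mathrm{d}x$ on $C_c(\mathfrak{S})$.
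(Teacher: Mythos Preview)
The paper does not supply its own proof of this statement: it is quoted as a preliminary result with the citation ``c.f.\ e.g.\ \cite[Proposition 1.2.10]{MRDS}'' and is used throughout the paper as a black box (for instance, to compute $|B(0,|x|)|=\frac{|\mathfrak{S}|}{Q}|x|^Q$ and in the change-of-variables manipulations in the proofs of Theorems~\ref{H.T.4.1}--\ref{H.T.5.2}). So there is no in-paper argument to compare against.

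That said, your proposal is correct and is precisely the standard proof one finds in the cited monograph: the homeomorphism $\Phi(r,y)=ry$ from $(0,\infty)\times\mathfrak{S}$ onto $\mathbb{G}\setminus\{0\}$, the definition $\sigma(E)=Q\,|\widetilde{E}|$ via the cone over $E$, the computation $|\widetilde{E}_{a,b}|=(b^Q-a^Q)|\widetilde{E}|$ from the dilation identity $|D_\lambda\omega|=\lambda^Q|\omega|$, and the passage from indicator functions of shells to general $f\in L^1(\mathbb{G})$ by a monotone-class/monotone-convergence argument. Your uniqueness argument (evaluating \eqref{H.I.Eq.3} on $f=\mathbf{1}_{\widetilde{E}_{0,1}}$) is also the standard one. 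The only caveat is cosmetic: your alternative Riesz--Markov route should take the functional $g\mapsto Q\int_{\{0<|x|<1\}} g(x/|x|)\,\mathrm{d}x$ on $C(\mathfrak{S})$ rather than on $C_c(\mathfrak{S})$, since $\mathfrak{S}$ is compact and the two coincide; this is harmless.
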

Here and in the sequel, we use the following notations. The letters $u$ and $v$ will be the weights on homogeneous groups $\mathbb{G}$. A quasi-ball in the homogeneous group $\mathbb{G}$ with radius $|x|$, $x\in \mathbb{G}$, and centred at the origin will be denoted by $B(0, |x|)$. We denote the surface measure of the unit sphere $\mathfrak{S}$ in $\mathbb{G}$ by $|\mathfrak{S}|$.

 The Haar measure of the quasi-ball $B(0, |x|)$ denoted by $|B(0, |x|)|$, can be calculated by using polar decomposition \eqref{H.I.Eq.3} as
\begin{align*}
    |B(0, |x|)|&=\int_{B(0, |x|)} \mathrm{d}y=\int_{0}^{|x|} r^{Q-1}\left(\int_{\mathfrak{S}} \mathrm{d}\sigma(t)\right)\mathrm{d}r=\dfrac{|\mathfrak{S}|}{Q}|x|^Q.
\end{align*}
\section{Main results}
We begin this section with the following main theorem involving general weight functions.
\begin{theorem}\label{H.T.4.1}
Let $\mathbb{G}$ be a homogeneous group with the homogeneous dimension $Q$ equipped with a quasi-norm $|\cdot|$, and let $0<p\leq q<\infty$ and $\beta>0$. Suppose that $u$ and $v$ are positive functions on $\mathbb{G}$. Then 
\begin{align}\label{H.Eq.4.1}
\begin{split}
    \left(\int_{\mathbb{G}} \left[\exp{\left(\dfrac{\beta}{|B(0, |x|)|^{\beta}} \int_{B(0, |x|)}|B(0, |y|)|^{\beta-1}\ln f(y)\mathrm{d}y\right)}\right]^q u(x)\mathrm{d}x\right)^{\frac{1}{q}}\\
    \leq C \left(\int_{\mathbb{G}} f^p(x)v(x)\mathrm{d}x\right)^{\frac{1}{p}}
    \end{split}
\end{align}
holds for some finite $C$ and for all positive $f$ if and only if for some $\alpha>0$
\begin{align}\label{H.Eq.4.2}
A(\alpha):=\sup\limits_{R>0}\,  R^\frac{{Q(\beta-1)+\alpha}}{p}\left(\int_{\mathbb{G}\setminus B(0, R)}   |x|^{-(Q\beta+\alpha)\frac{q}{p}} w(x)\mathrm{d}x\right)^\frac{1}{q}<\infty,
\end{align}
where 
\begin{align}\label{H.Eq.4.2.1}
w(x)=u(x)\left[\exp{\left(\dfrac{\beta}{|B(0, |x|)|^{\beta}}\int_{B(0, |x|)}|B(0, |y|)|^{\beta-1}\ln v^{-1}(y)\mathrm{d}y\right)}\right]^\frac{q}{p}.
\end{align}
Moreover, the best constant $C$ in \eqref{H.Eq.4.1} satisfies
\begin{align}\label{H.Eq.4.3}
\begin{split}
\left(\dfrac{1}{|B(0, 1)|}\right)^{1/p}\sup\limits_{\alpha>0} \left(1+\dfrac{\exp{\left(-\dfrac{\beta Q+\alpha}{\beta Q}\right)}}{\alpha/Q+(\beta-1)} \right)^{-1/p} A(\alpha)\\
\leq
 C\leq \left(\dfrac{\beta}{|B(0, 1)|}\right)^{1/p}\inf\limits_{\alpha>0}  \exp{\left(\dfrac{\alpha}{\beta p Q}\right)} A(\alpha). 
\end{split}
\end{align}
Furthermore, if $A(\alpha)<\infty$ for some $\alpha>0$, then $A(\alpha)<\infty$ for all $\alpha>0$.
\end{theorem}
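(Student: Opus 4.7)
The plan is to prove \eqref{H.Eq.4.1} directly — avoiding the usual passage to the limit from a Hardy inequality — by inserting a free parameter $|y|^{\alpha}$ inside the exponential, applying Jensen's inequality, and then using Minkowski's integral inequality in the range $q/p\ge 1$. The necessity of \eqref{H.Eq.4.2} is then obtained by testing against an explicit truncated radial function. The key preparation, done via the polar decomposition \eqref{H.I.Eq.3} and $|B(0,|x|)|=|B(0,1)|\,|x|^{Q}$, is to check that
\begin{align*}
d\mu_{x}(y):=\frac{\beta\,|B(0,|y|)|^{\beta-1}}{|B(0,|x|)|^{\beta}}\,\chi_{B(0,|x|)}(y)\,dy
\end{align*}
is a probability measure on $B(0,|x|)$ and that a short integration by parts in the radial variable gives
\begin{align*}
\int_{B(0,|x|)}\ln|y|\,d\mu_{x}(y)=\ln|x|-\frac{1}{Q\beta}.
\end{align*}
These two identities account for every explicit constant appearing in \eqref{H.Eq.4.3}.

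For sufficiency, fix $\alpha>0$ and decompose
\begin{align*}
\ln f(y)=\frac{1}{p}\ln\bigl(f^{p}(y)\,v(y)\,|y|^{\alpha}\bigr)-\frac{1}{p}\ln v(y)-\frac{\alpha}{p}\ln|y|.
\end{align*}
Integrating against $d\mu_{x}$, exponentiating, raising to the $q$-th power, and multiplying by $u(x)$, the second and third terms produce exactly the weight $w(x)$ of \eqref{H.Eq.4.2.1} together with the explicit factor $|x|^{-\alpha q/p}e^{\alpha q/(pQ\beta)}$; for the first term Jensen's inequality yields
\begin{align*}
\exp\int\ln\bigl(f^{p}v|y|^{\alpha}\bigr)\,d\mu_{x}\le\int_{B(0,|x|)} f^{p}(y)v(y)|y|^{\alpha}\,d\mu_{x}(y).
\end{align*}
Rewriting this integral in polar form bounds the left-hand side of \eqref{H.Eq.4.1} by
\begin{align*}
\Bigl(\tfrac{\beta}{|B(0,1)|}\Bigr)^{1/p}e^{\alpha/(pQ\beta)}\Bigl(\int_{\mathbb{G}}|x|^{-(Q\beta+\alpha)q/p}w(x)\Phi(x)^{q/p}\,dx\Bigr)^{1/q},
\end{align*}
where $\Phi(x):=\int_{B(0,|x|)}f^{p}(y)v(y)|y|^{Q(\beta-1)+\alpha}\,dy$. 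Since $q/p\ge 1$, Minkowski's integral inequality in $L^{q/p}$ swaps the order of integration, reducing the problem to the pointwise estimate
\begin{align*}
|y|^{Q(\beta-1)+\alpha}\Bigl(\int_{|x|\ge|y|}|x|^{-(Q\beta+\alpha)q/p}w(x)\,dx\Bigr)^{p/q}\le A(\alpha)^{p},
\end{align*}
which is exactly \eqref{H.Eq.4.2} evaluated at $R=|y|$. Taking the infimum over $\alpha>0$ gives the upper bound of \eqref{H.Eq.4.3}.

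For necessity, for each $\alpha>0$ and $R>0$ I would substitute into \eqref{H.Eq.4.1} a radial test function of the form $f_{R,\alpha}(y)=v(y)^{-1/p}h_{R,\alpha}(|y|)$ with $h_{R,\alpha}(r)=r^{-(Q\beta+\alpha)/p}$ for $r>R$ and a suitable truncation on $\{r\le R\}$, the latter being needed to keep the right-hand side finite under the natural convergence condition $Q(\beta-1)+\alpha>0$. The preparatory identities collapse $\exp\int\ln f_{R,\alpha}\,d\mu_{x}$ to an explicit closed form involving, on $|x|>R$, the factor $\exp\bigl(-s(1-(R/|x|)^{Q\beta})/(Q\beta)\bigr)$ with $s=-(Q\beta+\alpha)/p$; the right-hand side evaluates in polar coordinates to a constant times $R^{-[Q(\beta-1)+\alpha]}$. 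Comparing the two sides and taking the supremum over $R$ produces the lower bound of \eqref{H.Eq.4.3}. The main obstacle will be engineering the truncation carefully so that the somewhat delicate combination $1+e^{-(Q\beta+\alpha)/(Q\beta)}/(\alpha/Q+\beta-1)$ comes out exactly rather than with slack: the denominator $\alpha/Q+\beta-1$ is produced by the polar integration on $\{|y|>R\}$, and the numerator $e^{-(Q\beta+\alpha)/(Q\beta)}$ arises from the $|x|\to\infty$ limit of the exponential factor above.

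Finally, the last statement of the theorem is a short bootstrap: if $A(\alpha_{0})<\infty$ for some $\alpha_{0}>0$, sufficiency gives $C<\infty$, and necessity then returns $A(\alpha)<\infty$ for every $\alpha>0$.
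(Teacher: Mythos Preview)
Your sufficiency argument is correct and is essentially identical to the paper's: the paper also inserts a free power $|y|^{\alpha}$, applies Jensen against the probability measure $d\mu_{x}$, and then Minkowski (Fubini if $p=q$) to arrive at the upper bound in \eqref{H.Eq.4.3}. The bootstrap for the final statement is likewise the same.

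The necessity direction, however, is where your proposal has a genuine gap. Your test function $f_{R,\alpha}=v^{-1/p}h_{R,\alpha}$ with $h_{R,\alpha}(r)=r^{-(Q\beta+\alpha)/p}$ for $r>R$ and a ``suitable'' (apparently continuous) truncation on $\{r\le R\}$ will establish $A(\alpha)<\infty$, but it will \emph{not} produce the exact lower bound in \eqref{H.Eq.4.3}. With the continuous choice $h_{R,\alpha}(r)=R^{-(Q\beta+\alpha)/p}$ on $[0,R]$, your own computation gives, on $\{|x|>R\}$, the residual factor $\exp\bigl(\tfrac{Q\beta+\alpha}{pQ\beta}(1-(R/|x|)^{Q\beta})\bigr)\ge 1$, while the right-hand side comes out proportional to $\bigl(1+\tfrac{1}{\alpha/Q+\beta-1}\bigr)^{1/p}$; this yields a lower bound strictly weaker than the one stated. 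Your claim that the constant $e^{-(Q\beta+\alpha)/(Q\beta)}$ ``arises from the $|x|\to\infty$ limit of the exponential factor'' is incorrect on two counts: the limit is $e^{+(Q\beta+\alpha)/(pQ\beta)}$ (wrong sign, extra $p$), and in any case this factor sits inside the $dx$-integral on the left and cannot be extracted by letting $|x|\to\infty$.

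The paper's resolution is not a limit argument but a specific \emph{discontinuous} test function: in the equivalent formulation \eqref{H.Eq.4.4} with $g=f^{p}v$, one takes
\[
g(x)=R^{-Q}\chi_{[0,R]}(|x|)+e^{-(Q\beta+\alpha)/(Q\beta)}\,\frac{R^{Q(\beta-1)+\alpha}}{|x|^{Q\beta+\alpha}}\,\chi_{(R,\infty)}(|x|).
\]
The jump factor $e^{-(Q\beta+\alpha)/(Q\beta)}$ is precisely what makes the left-hand exponential collapse \emph{exactly} (for every $|x|>R$, not just asymptotically) to $R^{Q(\beta-1)+\alpha}|x|^{-(Q\beta+\alpha)}$, while the right-hand side evaluates to $|B(0,1)|\bigl(1+\tfrac{e^{-(Q\beta+\alpha)/(Q\beta)}}{\alpha/Q+\beta-1}\bigr)$. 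That is where the constant you are trying to manufacture actually comes from.
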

\begin{proof}
{\it Sufficiency.} Let $g(x)=f^p(x)v(x)$.  Then the inequality \eqref{H.Eq.4.1} is equivalent to 
\begin{align}\label{H.Eq.4.4}
\begin{split}
   \left(\int_{\mathbb{G}} \left[\exp{\left(\dfrac{\beta}{|B(0, |x|)|^{\beta}} \int_{B(0, |x|)}|B(0, |y|)|^{\beta-1}\ln g(y)\mathrm{d}y\right)}\right]^\frac{q}{p} w(x)\mathrm{d}x\right)^{\frac{1}{q}}\\
    \leq C \left(\int_{\mathbb{G}} g(x)\mathrm{d}x\right)^{\frac{1}{p}},
    \end{split}
\end{align}
where $w(x)$ is defined by \eqref{H.Eq.4.2.1}. Clearly,
\begin{align}\label{F.H.Equ.3.6a}
\int_{B(0, |x|)}|B(0, |y|)|^{\beta-1}\ln g(y)\mathrm{d}y=\left(\dfrac{|\mathfrak{S}|}{Q}\right)^{\beta-1}\int_0^{|x|}\int_{\mathfrak{S}} r^{\beta Q-1} \ln g(r\xi)\mathrm{d}\sigma(\xi)\mathrm{d}r.
\end{align}
Let $r=|x|s$. Then, from \eqref{F.H.Equ.3.6a}, we have that
\begin{align*}
\int_{B(0, |x|)}|B(0, |y|)|^{\beta-1}\ln g(y)\mathrm{d}y=\\\dfrac{|B(0, |x|)|^\beta}{|B(0,1)|} \int_0^{1}\int_{\mathfrak{S}} s^{Q-1} |s\xi|^{Q(\beta-1)} \ln g(|x|s\xi)\mathrm{d}\sigma(\xi)\mathrm{d}s=\\
\dfrac{|B(0, |x|)|^\beta}{|B(0,1)|} \int_{B(0, 1)} |\xi|^{Q(\beta-1)}\ln g(|x|\xi) \mathrm{d}\xi,
\end{align*}
which implies that
\begin{align}\label{H.Eq.4.5}
\begin{split}
\exp{\left(\dfrac{\beta}{|B(0, |x|)|^{\beta}} \int_{B(0, |x|)}|B(0, |y|)|^{\beta-1}\ln g(y)\mathrm{d}y\right)}\\=\exp{\left(\dfrac{\beta}{|B(0, 1)|}\int_{B(0, 1)} |\xi|^{Q(\beta-1)}\ln g(|x|\xi) \mathrm{d}\xi\right)}.
\end{split}
\end{align}
For any $\alpha>0$, we have that
\begin{align*}
\int_{B(0, 1)}|\xi|^{Q(\beta-1)}\ln (|\xi|^\alpha) \mathrm{d}\xi&= \int_0^{1}\int_{\mathfrak{S}} r^{\beta Q-1} \ln r^\alpha \mathrm{d}\sigma(y)\mathrm{d}r\\
&=\dfrac{\alpha}{\beta}|B(0,1)|\int_0^1 \ln r\,\mathrm{d}(r^{\beta Q})=-\dfrac{\alpha}{\beta}|B(0,1)|\int_0^1 r^{\beta Q-1}\,\mathrm{d}r\\
&=\left(\dfrac{-\alpha }{\beta Q}\right) \dfrac{|B(0,1)|}{\beta},
\end{align*}
which implies that
\begin{align}\label{H.Eq.4.6}
\exp{\left(\dfrac{\alpha}{\beta Q}\right)} \exp{\left(\dfrac{\beta}{|B(0, 1)|}\int_{B(0, 1)}|\xi|^{Q(\beta-1)}\ln (|\xi|^\alpha) \mathrm{d}\xi\right)}=1.
\end{align}
We apply \eqref{H.Eq.4.5} and the identity \eqref{H.Eq.4.6} in the left hand side of \eqref{H.Eq.4.4} to find that 
\begin{align}\label{H.F.D.Equ.3.9}
\begin{split}
\left(\int_{\mathbb{G}} \left[\exp{\left(\dfrac{\beta}{|B(0, |x|)|^{\beta}} \int_{B(0, |x|)}|B(0, |y|)|^{\beta-1}\ln g(y)\mathrm{d}y\right)}\right]^\frac{q}{p} w(x)\mathrm{d}x\right)^{\frac{1}{q}}=\\
\exp{\left(\dfrac{\alpha}{p \beta Q}\right)}\left(\int_{\mathbb{G}} \left[\exp{\left(\dfrac{\beta}{|B(0, 1)|}\int_{B(0, 1)} |\xi|^{Q(\beta-1)}\ln\left(|\xi|^\alpha g(|x|\xi)\right) \mathrm{d}\xi\right)}\right]^\frac{q}{p} w(x)\mathrm{d}x\right)^{\frac{1}{q}}.
\end{split}
\end{align}
Since, 
\begin{align*}
\dfrac{\beta}{|B(0, 1)|}\int_{B(0, 1)} |\xi|^{Q(\beta-1)} \mathrm{d}\xi=1,
\end{align*}
then by Jensen's inequality
\begin{align*}
\exp{\left(\dfrac{\beta}{|B(0, 1)|}\int_{B(0, 1)} |\xi|^{Q(\beta-1)}\ln\left(|\xi|^\alpha g(|x|\xi)\right) \mathrm{d}\xi\right)}\leq \\\dfrac{\beta}{|B(0, 1)|}\int_{B(0, 1)} |\xi|^{Q(\beta-1)+\alpha} g(|x|\xi)\, \mathrm{d}\xi,
\end{align*}
which implies that
\begin{align}\label{F.H.Thm.3.1.Equ.3.10}
\begin{split}
\exp{\left(\dfrac{\alpha}{p\beta Q}\right)}\left(\int_{\mathbb{G}} \left[\exp{\left(\dfrac{\beta}{|B(0, 1)|}\int_{B(0, 1)} |\xi|^{Q(\beta-1)}\ln\left(|\xi|^\alpha g(|x|\xi)\right) \mathrm{d}\xi\right)}\right]^\frac{q}{p} w(x)\mathrm{d}x\right)^\frac{1}{q}\\
\leq \left(\dfrac{\beta}{|B(0, 1)|}\exp{\left(\dfrac{\alpha}{\beta Q}\right)}\right)^{1/p}\left(\int_\mathbb{G} w(x)\left(\int_{B(0, 1)} |\xi|^{Q(\beta-1)+\alpha} g(|x|\xi) \mathrm{d}\,\xi\right)^\frac{q}{p}\,\mathrm{d}x\right)^\frac{1}{q}.
\end{split}
\end{align}
Let us change $|x|\xi$ by $\xi$ in \eqref{F.H.Thm.3.1.Equ.3.10}. Then, we have
\begin{align}
\begin{split}
\exp{\left(\dfrac{\alpha}{p\beta Q}\right)}\left(\int_{\mathbb{G}} \left[\exp{\left(\dfrac{\beta}{|B(0, 1)|}\int_{B(0, 1)} |\xi|^{Q(\beta-1)}\ln\left(|\xi|^\alpha g(|x|\xi)\right) \mathrm{d}\xi\right)}\right]^\frac{q}{p} w(x)\mathrm{d}x\right)^\frac{1}{q}\\
\leq \left(\dfrac{\beta \exp{\left(\dfrac{\alpha}{\beta Q}\right)}}{|B(0, 1)|}\right)^{1/p}\left(\int_\mathbb{G} |x|^{-(Q\beta+\alpha)\frac{q}{p}} w(x)\left(\int_{B(0, |x|)} |\xi|^{Q(\beta-1)+\alpha} g(\xi) \mathrm{d}\,\xi\right)^\frac{q}{p}\,\mathrm{d}x\right)^\frac{1}{q}.
\end{split}
\end{align}
Therefore, by using Minkowski's integral inequality when $p<q$ and Fubini's theorem when $p=q$, the later expression is less than or equal to
\begin{align}\label{H.Eq.4.8}
\begin{split}
\left(\dfrac{\beta \exp{\left(\dfrac{\alpha}{\beta Q}\right)}}{|B(0, 1)|}\right)^{1/p}  \left(\int_{\mathbb{G}}  g(\xi)\cdot |\xi|^{Q(\beta-1)+\alpha}\left(\int_{\mathbb{G}\setminus B(0, |\xi|)}   |x|^{-(Q\beta+\alpha)\frac{q}{p}} w(x)\mathrm{d}x\right)^\frac{p}{q} \mathrm{d}\xi\right)^{\frac{1}{p}}\\
\leq \left(\dfrac{\beta}{|B(0, 1)|}\exp{\left(\dfrac{\alpha}{\beta Q}\right)}\right)^{1/p} A(\alpha)\left(\int_{\mathbb{G}} g(x)\mathrm{d}x\right)^\frac{1}{p},
\end{split}
\end{align}
so that \eqref{H.Eq.4.4} follows from \eqref{H.Eq.4.2} and \eqref{H.F.D.Equ.3.9} -- \eqref{H.Eq.4.8}. Since \eqref{H.Eq.4.1} is equivalent to \eqref{H.Eq.4.4}, we conclude that \eqref{H.Eq.4.1} holds and the best constant $C$ satisfies
\begin{align*}
C\leq \left(\dfrac{\beta}{|B(0, 1)|}\right)^{1/p}\inf\limits_{\alpha>0}  \exp{\left(\dfrac{\alpha}{p\beta Q}\right)} A(\alpha).
\end{align*}

{\it Necessity.} To prove that \eqref{H.Eq.4.1}, or equivalently \eqref{H.Eq.4.4}, implies \eqref{H.Eq.4.2}, we define the test function $g$ on $\mathbb{G}$ by
\begin{align*}
g(x)=R^{-Q}\chi_{[\,0, R\, ]}(|x|)+\exp{\left(-\dfrac{Q\beta+\alpha}{\beta Q}\right)}\,\dfrac{R^{Q(\beta-1)+\alpha}}{|x|^{\beta Q+\alpha}}\, \chi_{(R, \infty)}(|x|),
\end{align*}
for fixed $R>0$, where $\alpha>Q(1-\beta)$.
Then, the right hand side of \eqref{H.Eq.4.4} becomes
\begin{align}\label{H.Eq.4.9}
\left(\int_{\mathbb{G}} g(x)\mathrm{d}x\right)^{1/p}=\left(\int_0^\infty \int_{\mathfrak{S}} r^{Q-1}g(ry)\mathrm{d}\sigma(y)\mathrm{d}r\right)^{1/p}\nonumber\\
=|\mathfrak{S}|^{1/p}\left(\int_0^R r^{Q-1} R^{-Q}\mathrm{d}r+ \exp{\left(-\frac{Q\beta+\alpha}{Q\beta}\right)}R^{Q(\beta-1)+\alpha}\int_R^\infty r^{Q(1-\beta)-\alpha-1}\mathrm{d}r\right)^{1/p}\nonumber\\
=|\mathfrak{S}|^{1/p}\left(\dfrac{1}{Q}+\dfrac{\exp{\left(-\frac{Q\beta+\alpha}{Q\beta}\right)}}{\alpha+Q(\beta-1)}\right)^{1/p}, \text{ since } \alpha>Q(1-\beta)\nonumber \\
=\left(\dfrac{|\mathfrak{S}|}{Q}\right)^{1/p}\left(1+ \dfrac{\exp{\left(-\frac{Q\beta+\alpha}{Q\beta}\right)}}{\alpha/Q+(\beta-1)}\right)^{1/p}
=|B(0, 1)|^{1/p}\left(1+\dfrac{\exp{\left(-\frac{Q\beta+\alpha}{Q\beta}\right)}}{\alpha/Q+(\beta-1)} \right)^{1/p}.
\end{align}
On the other hand, for $R>0$, we have that
\begin{align}\label{H.Eq.4.10}
\begin{split}
\left(\int_{\mathbb{G}\setminus B(0, R)} \left[\exp{\left(\dfrac{\beta}{|B(0, |x|)|^{\beta}} \int_{B(0, |x|)}|B(0, |y|)|^{\beta-1}\ln g(y)\mathrm{d}y\right)}\right]^\frac{q}{p} w(x)\mathrm{d}x\right)^{\frac{1}{q}}\\
\leq \left(\int_{\mathbb{G}} \left[\exp{\left(\dfrac{\beta}{|B(0, |x|)|^{\beta}} \int_{B(0, |x|)}|B(0, |y|)|^{\beta-1}\ln g(y)\mathrm{d}y\right)}\right]^\frac{q}{p} w(x)\mathrm{d}x\right)^{\frac{1}{q}}.
\end{split}
\end{align}
Moreover, for $0<R<|x|$, we have
\begin{align*}
\int_{B(0, |x|)}|B(0, |y|)|^{\beta-1}\ln g(y)\mathrm{d}y=\\
\left(\dfrac{|\mathfrak{S}|}{Q}\right)^{\beta-1}\int_0^{|x|}\int_{\mathfrak{S}} r^{Q\beta-1}\ln g(ry)\mathrm{d}\sigma(y)\mathrm{d}r\\
=Q\left(\dfrac{|\mathfrak{S}|}{Q}\right)^{\beta}\left[\int_0^R r^{Q\beta-1}\ln R^{-Q}\mathrm{d}r+\int_R^{|x|} r^{Q\beta-1}\ln \left(e^{-(\frac{Q\beta+\alpha}{\beta Q})}\right)\mathrm{d}r\right]\\
+Q\left(\dfrac{|\mathfrak{S}|}{Q}\right)^{\beta}\left[\int_R^{|x|} r^{Q\beta-1}\ln R^{Q(\beta-1)+\alpha}\mathrm{d}r-\int_R^{|x|} r^{Q\beta-1}\ln r^{\beta Q+\alpha}\mathrm{d}r\right]\\
=Q\left(\dfrac{|\mathfrak{S}|}{Q}\right)^{\beta}\Big\{\dfrac{1}{Q\beta}R^{Q\beta}\ln R^{-Q}-\left(\dfrac{Q\beta+\alpha}{(Q\beta)^2}\right)\Big(|x|^{Q\beta}-R^{Q\beta}\Big)\\+\dfrac{1}{Q\beta}\ln R^{Q(\beta-1)+\alpha}\Big(|x|^{Q\beta}-R^{Q\beta}\Big)\Big\}\\
-Q\left(\dfrac{|\mathfrak{S}|}{Q}\right)^{\beta}\left(\dfrac{Q\beta+\alpha}{Q\beta}\right)\left[\left(|x|^{Q\beta}\ln |x|-\dfrac{|x|^{Q\beta}}{Q\beta}\right)-\left(R^{Q\beta}\ln R-\dfrac{R^{Q\beta}}{Q\beta}\right)\right]\\
=Q \left(\dfrac{|\mathfrak{S}|}{Q}\right)^{\beta}\left[\dfrac{Q(\beta-1)+\alpha}{Q\beta}|x|^{Q\beta}\ln R-\dfrac{(Q\beta+\alpha)}{Q\beta}|x|^{Q\beta}\ln |x|\right]\\
=\dfrac{|B(0, |x|)|^\beta}{\beta}\ln \left(R^{Q(\beta-1)+\alpha} |x|^{-(\beta Q+\alpha)}\right),
\end{align*}
which implies that
\begin{align}\label{H.Eq.4.11}
\dfrac{\beta}{|B(0, |x|)|^{\beta}}\int_{B(0, |x|)}|B(0, |y|)|^{\beta-1}\ln g(y)\mathrm{d}y= \ln \left(R^{Q(\beta-1)+\alpha} |x|^{-(\beta Q+\alpha)}\right).
\end{align}
It follows from \eqref{H.Eq.4.4} and \eqref{H.Eq.4.9} -- \eqref{H.Eq.4.11} that
\begin{align*}
R^\frac{Q(\beta-1)+\alpha}{p} \left(\int_{\mathbb{G}\setminus B(0, R)}   |x|^{-(\beta Q+\alpha)\frac{q}{p}} w(x)\mathrm{d}x\right)^\frac{1}{q}\\
=\left(\int_{\mathbb{G}\setminus B(0, R)} \left[\exp{\left(\dfrac{\beta}{|B(0, |x|)|^{\beta}} \int_{B(0, |x|)}|B(0, |y|)|^{\beta-1}\ln g(y)\mathrm{d}y\right)}\right]^\frac{q}{p} w(x)\mathrm{d}x\right)^{\frac{1}{q}}\\
\leq C|B(0, 1)|^{1/p}\left(1+\dfrac{\exp{\left(-\dfrac{\beta Q+\alpha}{\beta Q}\right)}}{\alpha/Q+(\beta-1)} \right)^{1/p}.
\end{align*}
Therefore, 
\begin{align*}
A(\alpha)&=\sup\limits_{R>0}\,  R^\frac{{Q(\beta-1)+\alpha}}{p}\left(\int_{\mathbb{G}\setminus B(0, R)}   |x|^{-(Q\beta+\alpha)\frac{q}{p}} w(x)\mathrm{d}x\right)^\frac{1}{q}\\
 &\leq C|B(0, 1)|^{1/p}\left(1+\dfrac{\exp{\left(-\frac{\beta Q+\alpha}{\beta Q}\right)}}{\alpha/Q+(\beta-1)} \right)^{1/p}<\infty.
\end{align*}
We conclude that \eqref{H.Eq.4.2} holds and that the sharp constant $C$ satisfies \eqref{H.Eq.4.3}. The last statement of Theorem \ref{H.T.4.1} follows from the observation that the property \eqref{H.Eq.4.1} is independent of $\alpha$.
The proof is complete.
\end{proof}
\begin{remark}
A similar characterization of Theorem \ref{H.T.4.1} was proved in \cite[Theorem 3.1]{MRASBT}, but the condition is weaker than the one in Theorem \ref{H.T.4.1}. Moreover, we applied a direct method instead of a limit procedure. 
\end{remark}

First we note that Theorem \ref{H.T.4.1} implies the following inequality for the power weights $u(x)=|B(0, |x|)|^a$ and $v(x)=|B(0, |x|)|^b$:
\begin{example}\label{H.Exa.4.1}
Let $\mathbb{G}$ be a homogeneous group with the homogeneous dimension $Q$ equipped with a quasi-norm $|\cdot|$. Let $0<p\leq q<\infty$, $\beta>0$, and $a, b\in \mathbb{R}$. Then the inequality 
\begin{align}\label{H.Eq.4.12}
\begin{split}
    \left(\int_{\mathbb{G}} \left[\exp{\left(\dfrac{\beta}{|B(0, |x|)|^{\beta}} \int_{B(0, |x|)}|B(0, |y|)|^{\beta-1}\ln f(y)\,\mathrm{d}y\right)}\right]^q |B(0, |x|)|^a\,\mathrm{d}x\right)^{\frac{1}{q}}\\
    \leq C \left(\int_{\mathbb{G}} f^p(x)|B(0, |x|)|^b\,\mathrm{d}x\right)^{\frac{1}{p}}
    \end{split}
\end{align}
holds for all positive functions $f$ if and only if
\begin{align}\label{H.Eq.4.13}
\dfrac{1+a}{q}=\dfrac{1+b}{p}.
\end{align}
Moreover, the best constant $C$ in \eqref{H.Eq.4.12} satisfies 
\begin{align}\label{H.Eq.4.14}
\begin{split}
\left(\dfrac{p}{q}\right)^{1/q}\exp{\left(\frac{b}{p\beta}\right)}\sup\limits_{\alpha>0} \left(1+\dfrac{\exp{\left(-\frac{Q\beta+\alpha}{Q\beta}\right)}}{\alpha/Q+(\beta-1)} \right)^{-1/p}\left(\frac{\alpha}{Q}+\beta-1\right)^{-1/q}\\
\leq C\leq \beta^{\frac{1}{p}-\frac{1}{q}}\exp{\left(\dfrac{1}{q}+\dfrac{1-\beta+b}{p\beta}\right)}.
\end{split}
\end{align}
Indeed, in this case \eqref{H.Eq.4.2.1} and \eqref{H.Eq.4.2} are of the form
\begin{align*}
w(x)=\exp{\left(\dfrac{bq}{\beta p}\right)}|B(0,|x|)|^{a-b\frac{q}{p}}
\end{align*}
and
\begin{align*}
A(\alpha)=\left(\dfrac{p}{q}\right)^{1/q}|B(0, 1)|^{\frac{1+a}{q}-\frac{b}{p}}\dfrac{\exp{\left(\frac{b}{p\beta}\right)}\sup\limits_{R>0} R^{Q\left(\frac{1+a}{q}-\frac{1+b}{p}\right)}}{[(b+\beta)+\alpha/Q-(1+a)\frac{p}{q}]^{1/q}} ,
\end{align*}
provided that $\alpha>Qp\left(\dfrac{1+a}{q}-\dfrac{\beta+b}{p}\right)$. We observe that the supremum in $R>0$ is finite if and only if \eqref{H.Eq.4.13} holds. Furthermore, if condition \eqref{H.Eq.4.13} is  satisfied, then we have
\begin{align}\label{H.Eq.4.15}
A(\alpha)=\left(\dfrac{p}{q}\right)^{1/q}|B(0, 1)|^{1/p}\dfrac{\exp{\left(\frac{b}{p\beta}\right)}}{[\alpha/Q+(\beta-1)]^{1/q}}.
\end{align}
Consequently, from \eqref{H.Eq.4.3} and \eqref{H.Eq.4.15}, we have
\begin{align}\label{H.Eq.4.16}
\begin{split}
\left(\dfrac{p}{q}\right)^{1/q}\exp{\left(\frac{b}{p\beta}\right)}\sup\limits_{\alpha>0} \left(1+\dfrac{\exp{\left(-\dfrac{Q\beta+\alpha}{Q\beta}\right)}}{\alpha/Q+(\beta-1)} \right)^{-1/p}\left(\dfrac{\alpha}{Q}+(\beta-1)\right)^{-1/q}\\
\leq C
\leq \left(\dfrac{p}{q}\right)^{1/q} \beta^{\frac{1}{p}-\frac{1}{q}} \exp{\left(\frac{b}{p\beta}\right)}\left[\inf\limits_{\alpha>0}   \dfrac{\exp{\left(\frac{q\alpha}{pQ\beta}\right)}}{\left(\frac{\alpha}{Q\beta}+1-\frac{1}{\beta}\right)}\right]^\frac{1}{q}.
\end{split}
\end{align}
Moreover, 
\begin{align}\label{H.Eq.4.17}
\left[\inf\limits_{\alpha>0}   \dfrac{\exp{\left(\frac{q\alpha}{pQ\beta}\right)}}{\left(\frac{\alpha}{Q\beta}+1-\frac{1}{\beta}\right)}\right]^{1/q}=\left(\dfrac{q}{p}\right)^{1/q} \exp{\left(\frac{1}{q}+\dfrac{1}{p\beta}-\dfrac{1}{p}\right)}.
\end{align}
Therefore, \eqref{H.Eq.4.14} follows from \eqref{H.Eq.4.16} and \eqref{H.Eq.4.17}.
\end{example}
\begin{remark}
Let $0<p\leq q<\infty$. If the condition \eqref{H.Eq.4.13} in Example \ref{H.Exa.4.1} holds, then the sharp constant $C$ in \eqref{H.Eq.4.12} satisfies
\begin{align*}
C\leq \beta^{\frac{1}{p}-\frac{1}{q}}\exp{\left(\dfrac{1}{q}-\dfrac{1}{p}\right)}\exp{\left(\dfrac{1+b}{\beta p}\right)}.\\
\end{align*}
\end{remark}
Moreover, we state and prove the following inequality for the power weights $u(x)=v(x)=|B(0, |x|)|^a$ with $p=q=1$:
\begin{theorem}\label{H.T.4.2}
Let $\mathbb{G}$ be a homogeneous group with the homogeneous dimension $Q$ equipped with a quasi-norm $|\cdot|$, and let $\beta>0$ and $a\in \mathbb{R}$. Then the inequality 
\begin{align}\label{H.Eq.4.19}
\begin{split}
    \int_{\mathbb{G}} \exp{\left(\dfrac{\beta}{|B(0, |x|)|^{\beta}} \int_{B(0, |x|)}|B(0, |y|)|^{\beta-1}\ln f(y)\mathrm{d}y\right)} |B(0, |x|)|^a\,\mathrm{d}x\\
    \leq e^{(a+1)/\beta} \int_{\mathbb{G}} f(x)|B(0, |x|)|^a\,\mathrm{d}x
    \end{split}
\end{align}
holds for all positive functions $f$ on $\mathbb{G}$, and the constant $e^{(a+1)/\beta}$ in \eqref{H.Eq.4.19} is sharp. 
\end{theorem}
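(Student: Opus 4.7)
The proof splits into (i) the upper bound $C \leq e^{(a+1)/\beta}$, which is automatic from Example~\ref{H.Exa.4.1}, and (ii) sharpness, obtained from an extremizing sequence built around the formal extremizer of the inequality.

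For the upper bound I would apply Example~\ref{H.Exa.4.1} with the parameter choice $p = q = 1$ and $b = a$. The compatibility condition \eqref{H.Eq.4.13} reduces to the trivial identity $1+a=1+a$, and the upper estimate in \eqref{H.Eq.4.14} simplifies to
\[
C \;\leq\; \beta^{0}\exp\!\left(1 + \frac{1 - \beta + a}{\beta}\right) \;=\; e^{(a+1)/\beta}.
\]

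For sharpness I would first identify the formal extremizer. Using the polar decomposition \eqref{H.I.Eq.3} together with $|B(0,r)| = (|\mathfrak{S}|/Q)\,r^{Q}$, a direct integration by parts shows that for every $s \in \mathbb{R}$
\[
\int_{B(0,|x|)}|B(0,|y|)|^{\beta-1}\ln|B(0,|y|)|^{s}\,\mathrm{d}y \;=\; \frac{s\,|B(0,|x|)|^{\beta}}{\beta}\Bigl[\ln |B(0,|x|)| - \tfrac{1}{\beta}\Bigr],
\]
so the exponential kernel on the left of \eqref{H.Eq.4.19} applied to $f(y) = |B(0,|y|)|^{s}$ is precisely $e^{-s/\beta}|B(0,|x|)|^{s}$. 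Choosing $s = -(a+1)$ makes the LHS integrand of \eqref{H.Eq.4.19} equal to exactly $e^{(a+1)/\beta}$ times the RHS integrand, identifying $|B(0,|\cdot|)|^{-(a+1)}$ as the (non-integrable) formal extremizer. To make this rigorous, I would perturb it on the two sides of the unit sphere by introducing, for $\epsilon > 0$ small, the admissible family
\[
f_{\epsilon}(y) \;=\; \begin{cases} |B(0,|y|)|^{-(a+1)+\epsilon}, & |y| < 1, \\[2pt] |B(0,|y|)|^{-(a+1)-\epsilon}, & |y| \geq 1. \end{cases}
\]
A short computation gives $\int_{\mathbb{G}}f_{\epsilon}(x)|B(0,|x|)|^{a}\,\mathrm{d}x = (|B(0,1)|^{\epsilon}+|B(0,1)|^{-\epsilon})/\epsilon$, which blows up like $2\epsilon^{-1}$ as $\epsilon\to 0^{+}$. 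On the left of \eqref{H.Eq.4.19}, splitting $\int_{B(0,|x|)}$ at $|y|=1$ and reusing the power-function formula, the exponential kernel applied to $f_{\epsilon}$ equals $e^{(a+1-\epsilon)/\beta}|B(0,|x|)|^{-(a+1)+\epsilon}$ for $|x|<1$ and $e^{(a+1+\epsilon)/\beta}|B(0,|x|)|^{-(a+1)-\epsilon}h_{\epsilon}(x)$ for $|x|\geq 1$, where $h_{\epsilon}(x) = \exp\bigl(O(\epsilon)/|B(0,|x|)|^{\beta}\bigr)$ tends uniformly to $1$ on $|x|\geq 1$. Integrating against $|B(0,|x|)|^{a}\,\mathrm{d}x$ shows that the LHS of \eqref{H.Eq.4.19} is asymptotic to $2e^{(a+1)/\beta}\epsilon^{-1}$ as $\epsilon \to 0^{+}$, so the ratio tends to $e^{(a+1)/\beta}$, forcing $C \geq e^{(a+1)/\beta}$ and completing the proof.

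The main technical point is the bookkeeping of the transition contributions coming from integrating the weighted log-mean across $|y|=1$: these produce a bounded additive constant inside the exponential (the factor $h_{\epsilon}$ above), and one has to verify that this factor perturbs the divergent $\epsilon^{-1}$ leading orders only at the subleading level so that the limit $e^{(a+1)/\beta}$ is preserved and matches the upper bound exactly.
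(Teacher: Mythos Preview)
Your upper bound argument is the same as the paper's: both invoke Example~\ref{H.Exa.4.1} with $p=q=1$ and $b=a$, and read off $C\le e^{(a+1)/\beta}$.

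For sharpness the approaches diverge. The paper does \emph{not} perturb the formal extremizer; it uses the one-parameter family
\[
f_\gamma(x)=\chi_{[0,\,e^{1/(\beta Q)}]}(|x|)+|x|^{-\gamma}\chi_{(e^{1/(\beta Q)},\,\infty)}(|x|),\qquad \gamma>Q(a+1),
\]
and sends $\gamma/Q\to(a+1)^{+}$. The transition radius $R_0=e^{1/(\beta Q)}$ is chosen so that the boundary term $\bigl[s^{\beta Q}\ln s/(\beta Q)-s^{\beta Q}/(\beta Q)^2\bigr]_{s=R_0}$ vanishes, which makes the weighted log-mean of $f_\gamma$ compute \emph{exactly} with no correction factor; both sides of \eqref{H.Eq.4.19} are then evaluated in closed form and the ratio tends to $e^{(a+1)/\beta}$. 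Your two-sided perturbation $f_\epsilon(y)=|B(0,|y|)|^{-(a+1)\pm\epsilon}$ is a different, more conceptual route: it first locates the non-integrable extremizer and then regularises symmetrically. The price is the transition factor $h_\epsilon(x)=\exp\bigl(2\epsilon\,|B(0,1)|^{\beta}|B(0,|x|)|^{-\beta}[\ln|B(0,1)|-1/\beta]\bigr)$, which you correctly observe is uniformly $1+O(\epsilon)$ on $|x|\ge 1$ and hence does not disturb the leading $\epsilon^{-1}$ asymptotics; this is the verification you flag at the end, and it goes through. One incidental advantage of your construction is that it works for every $a\in\mathbb{R}$, whereas the paper's test function has $\int_0^{R_0}r^{Q(a+1)-1}\,\mathrm{d}r$ on the right, which needs $a>-1$.
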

\begin{proof}
In view of Example \ref{H.Exa.4.1}, the inequality \eqref{H.Eq.4.19} holds with some constant. Now, we need to prove that the sharp constant is $C=e^{(a+1)/\beta}$. From \eqref{H.Eq.4.2} and \eqref{H.Eq.4.2.1} with $p=q=1$, we have that
\begin{align*}
w(x)=\exp{\left(\dfrac{a}{\beta}\right)} \text{ and } A(\alpha)=\exp{\left(\dfrac{a}{\beta}\right)}\dfrac{|B(0,1)|}{\alpha/Q+(\beta-1)}.
\end{align*}

It follows from \eqref{H.Eq.4.3} that the sharp constant $C$ satisfies
\begin{align*}
C\leq \beta \exp{\left(\dfrac{a}{\beta}\right)}\inf\limits_{\alpha>0} \dfrac{\exp{\left(\dfrac{\alpha}{\beta  Q}\right)}}{(\beta-1)+\alpha/Q}. 
\end{align*}
The infimum in the above inequality is attained at $\alpha=Q$. Hence, 
\begin{align}\label{H.Eq.4.20}
C\leq e^{(1+a)/\beta}.
\end{align}
It only remains to prove that the inequality \eqref{H.Eq.4.20} also holds in the reverse direction.  Consider the function
\begin{align*}
f(x)= \chi_{[ 0,\,e^{\frac{1}{\beta Q}} ]}(|x|)+|x|^{-\gamma}\chi_{(e^{\frac{1}{\beta Q}},\,\infty)}(|x|),
\end{align*}
where $\gamma>Q(a+1)$. Then, the integral part of the right hand side of \eqref{H.Eq.4.19} becomes
\begin{align}\label{H.Eq.4.21}
\int_{\mathbb{G}} f(x)|B(0, |x|)|^a\,\mathrm{d}x&= \int_0^\infty \int_{\mathfrak{S}} r^{Q-1} f(ry)|B(0,r)|^a\, \mathrm{d}\sigma(y)\mathrm{d}r\nonumber\\
&=\dfrac{|\mathfrak{S}|^{a+1}}{Q^a} \int_0^\infty r^{Q(a+1)-1} \Big(\chi_{[ 0,e^{\frac{1}{\beta Q}} ]}(r)+r^{-\gamma}\chi_{(e^{\frac{1}{\beta Q}},\infty)}(r)\Big)\, \mathrm{d}r\nonumber\\
&= |B(0, 1)|^{a+1} e^{(a+1)/\beta}\left(\dfrac{1}{(a+1)}+\dfrac{e^{-\gamma/\beta Q}}{\gamma/Q-(a+1)}\right),
\end{align}
and the left hand side of \eqref{H.Eq.4.19} becomes
\begin{align}\label{H.Eq.4.22}
\int_{\mathbb{G}} \exp{\left(\dfrac{\beta}{|B(0, |x|)|^{\beta}}\int_{B(0, |x|)} |B(0, |y|)|^{\beta-1}\ln f(y)\,\mathrm{d}y\right)} |B(0, |x|)|^a\,\mathrm{d}x\nonumber=\\
 \int_0^\infty \int_{\mathfrak{S}} r^{Q-1} \exp{\left(\dfrac{\beta}{|B(0,r)|^\beta}\int_{B(0,r)} |B(0, |y|)|^{\beta-1}\ln f(y)\,\mathrm{d}y\right)} |B(0,r)|^a\,\mathrm{d}\sigma(y)\mathrm{d}r=\nonumber\\
\dfrac{|\mathfrak{S}|^{a+1}}{Q^a}\int_0^\infty r^{Q(a+1)-1} \exp{\left(\dfrac{\beta Q}{r^{\beta Q}}\int_0^r s^{\beta Q-1}\ln \Big(\chi_{[ 0,e^{\frac{1}{\beta Q}}]}(s)+s^{-\gamma}\chi_{(e^{\frac{1}{\beta Q}},\infty)}(s)\Big)\mathrm{d}s\right)}\mathrm{d}r\nonumber\\
= |B(0, 1)|^{a+1} e^{(a+1)/\beta}\left(\dfrac{1}{(a+1)}+\dfrac{1}{\gamma/Q-(a+1)}\right) .
\end{align}
It follows from \eqref{H.Eq.4.19}, \eqref{H.Eq.4.21} and \eqref{H.Eq.4.22} that
\begin{align*}
\dfrac{e^{\gamma/\beta Q}}{\frac{Q}{\gamma}(a+1)+\left(1-\frac{Q}{\gamma}(a+1)\right)e^{\gamma/\beta Q}}\leq C.
\end{align*} 
By letting $\gamma/Q\to (a+1)^+$, we find that 
\begin{align}\label{H.Eq.4.23}
e^{(a+1)/\beta}\leq C.
\end{align}
Therefore, the sharpness of the constant in \eqref{H.Eq.4.19} follows by combining \eqref{H.Eq.4.20} and \eqref{H.Eq.4.23}. The proof is complete.
\end{proof}

Next, by using a suitable transformations and arguments as those in the proof of Theorem \ref{H.T.4.1}, we can also derive the following dual version.
\begin{theorem}\label{H.T.5.1}
Let $\mathbb{G}$ be a homogeneous group with the homogeneous dimension $Q$ equipped with a quasi-norm $|\cdot|$, and let $0<p\leq q<\infty$ and $\beta>0$. Suppose that $u$ and $v$ are positive functions on $\mathbb{G}$. Then 
\begin{align}\label{H.Eq.5.1}
\begin{split}
    \left(\int_{\mathbb{G}} \left[\exp{\left(\beta |B(0, |x|)|^{\beta} \int_{\mathbb{G}\setminus B(0, |x|)}|B(0, |y|)|^{-(\beta+1)}\ln f(y)\mathrm{d}y\right)}\right]^q u(x)\mathrm{d}x\right)^{\frac{1}{q}}\\
    \leq C \left(\int_{\mathbb{G}} f^p(x)v(x)\mathrm{d}x\right)^{\frac{1}{p}}
    \end{split}
\end{align}
holds for some finite $C$ and for all positive functions $f$ if and only if for some $\alpha>0$
\begin{align}\label{H.Eq.5.2}
\widetilde{A}(\alpha):=\sup\limits_{R>0}\,  R^\frac{{Q(\beta-1)+\alpha}}{p}\left(\int_{\mathbb{G}\setminus B(0, R)}   |x|^{-(Q\beta+\alpha)\frac{q}{p}} \widetilde{w}(x)\mathrm{d}x\right)^\frac{1}{q}<\infty,
\end{align}
where 
\begin{align}\label{H.Eq.5.2.1}
\widetilde{w}(x)=\widetilde{u}(x)\left[\exp{\left(\dfrac{\beta}{|B(0, |x|)|^{\beta}}\int_{B(0, |x|)}|B(0, |y|)|^{\beta-1}\ln \dfrac{1}{\widetilde{v}(y)} \mathrm{d}y\right)}\right]^\frac{q}{p},
\end{align}
and
\begin{align}\label{H.Eq.5.2.2}
\widetilde{u}(ry)=|ry|^{-2Q}u\left(\frac{1}{r}y\right) \text{ and }\widetilde{v}(ry)=|ry|^{-2Q}v\left(\frac{1}{r}y\right)
\end{align}
 for $r>0$. Moreover, the best constant $C$ in \eqref{H.Eq.5.1} satisfies
\begin{align}\label{H.Eq.5.3}
\begin{split}
\left(\dfrac{1}{|B(0, 1)|}\right)^{1/p}\sup\limits_{\alpha>0} \left(1+\dfrac{\exp{\left(-\dfrac{\beta Q+\alpha}{\beta Q}\right)}}{\alpha/Q+(\beta-1)} \right)^{-1/p}  \widetilde{A}(\alpha)
\leq C\\\leq \left(\dfrac{\beta}{|B(0, 1)|}\right)^{1/p}\inf\limits_{\alpha>0}  \exp{\left(\dfrac{\alpha}{\beta p Q}\right)} \widetilde{A}(\alpha). 
\end{split}
\end{align}
Furthermore, if $\widetilde{A}(\alpha)<\infty$ for some $\alpha>0$, then $\widetilde{A}(\alpha)<\infty$ for all $\alpha>0$.
\end{theorem}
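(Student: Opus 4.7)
The plan is to reduce Theorem \ref{H.T.5.1} to Theorem \ref{H.T.4.1} via the quasi-norm inversion $x \mapsto \tilde x := x/|x|^{2}$, which in polar coordinates sends $r\xi$ to $\xi/r$, interchanges the regions $B(0,|x|)$ and $\mathbb{G}\setminus B(0,|x|)$, and has Jacobian $|\tilde x|^{-2Q}$. Accordingly, I would introduce the pulled-back function $\tilde f(\tilde x) := f(\tilde x/|\tilde x|^{2})$ and the pulled-back weights $\tilde u,\tilde v$ specified in \eqref{H.Eq.5.2.2}; the map $f\mapsto\tilde f$ is a bijection on positive functions, so \eqref{H.Eq.5.1} for all positive $f$ is equivalent to the corresponding inequality for all positive $\tilde f$.

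The key computation to carry out is that the outer exponent in \eqref{H.Eq.5.1} is exactly the inner exponent in \eqref{H.Eq.4.1} under this change of variable. Writing $y = z/|z|^{2}$ one gets $|B(0,|y|)|^{-(\beta+1)} = (|\mathfrak{S}|/Q)^{-(\beta+1)}|z|^{Q(\beta+1)}$, $\mathrm{d}y = |z|^{-2Q}\mathrm{d}z$, and $\{|y|>|x|\} = \{|z|<1/|x|\}$, so that
\begin{equation*}
\int_{\mathbb{G}\setminus B(0,|x|)}\! |B(0,|y|)|^{-(\beta+1)}\ln f(y)\,\mathrm{d}y = \Bigl(\tfrac{|\mathfrak{S}|}{Q}\Bigr)^{\!-2\beta}\!\int_{B(0,1/|x|)}\! |B(0,|z|)|^{\beta-1}\ln \tilde f(z)\,\mathrm{d}z.
\end{equation*}
Multiplying by $\beta|B(0,|x|)|^{\beta}$ and using the identity $|B(0,|x|)|^{\beta}|B(0,1/|x|)|^{\beta} = (|\mathfrak{S}|/Q)^{2\beta}$ produces $\beta/|B(0,|\tilde x|)|^{\beta}$ in front of the ball integral, which is exactly the exponent in \eqref{H.Eq.4.1} evaluated at $\tilde x = x/|x|^{2}$.

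Combining this with $u(x)\mathrm{d}x = \tilde u(\tilde x)\mathrm{d}\tilde x$ and $v(x)\mathrm{d}x = \tilde v(\tilde x)\mathrm{d}\tilde x/\tilde f^{p}(\tilde x)\cdot\tilde f^{p}(\tilde x)$ (after recognizing $f^{p}(x)=\tilde f^{p}(\tilde x)$), the inequality \eqref{H.Eq.5.1} is rewritten as
\begin{equation*}
\left(\int_{\mathbb{G}}\!\left[\exp\!\left(\tfrac{\beta}{|B(0,|\tilde x|)|^{\beta}}\!\int_{B(0,|\tilde x|)}\!|B(0,|z|)|^{\beta-1}\ln \tilde f(z)\mathrm{d}z\right)\right]^{q}\!\tilde u(\tilde x)\,\mathrm{d}\tilde x\right)^{\!1/q}\!\leq C\left(\int_{\mathbb{G}}\tilde f^{p}\tilde v\,\mathrm{d}\tilde x\right)^{\!1/p}\!.
\end{equation*}
This is precisely \eqref{H.Eq.4.1} for the triple $(\tilde f,\tilde u,\tilde v)$, so Theorem \ref{H.T.4.1} immediately yields the characterization \eqref{H.Eq.5.2} (with $\tilde w$ given by \eqref{H.Eq.5.2.1}) and the bounds \eqref{H.Eq.5.3} on the best constant transcribed from \eqref{H.Eq.4.3}. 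The $\alpha$-independence of \eqref{H.Eq.5.2} is inherited from the same statement in Theorem \ref{H.T.4.1}.

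The only genuinely technical step is the bookkeeping in the exponent computation above; once the factors of $(|\mathfrak{S}|/Q)^{\beta}$ and the reciprocal volumes are collected correctly, everything else is a clean change of variables. In particular, no new analytic argument (no Jensen, no Minkowski, no test function) is needed beyond what is already established in Theorem \ref{H.T.4.1}, which is the reason the authors say the result follows by ``a suitable transformation and arguments as those in the proof of Theorem \ref{H.T.4.1}.''
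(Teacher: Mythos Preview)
Your proposal is correct and follows essentially the same route as the paper: both reduce \eqref{H.Eq.5.1} to the inequality \eqref{H.Eq.4.1} for the transformed data $(\tilde f,\tilde u,\tilde v)$ via the radial inversion $r\xi\mapsto (1/r)\xi$, and then invoke Theorem~\ref{H.T.4.1} directly to obtain the characterization \eqref{H.Eq.5.2} and the constant bounds \eqref{H.Eq.5.3}. Your write-up is in fact more explicit than the paper's about the bookkeeping of the $(|\mathfrak{S}|/Q)^{\beta}$ factors in the exponent, which the paper subsumes into the unlabeled ``variable transformations'' leading to \eqref{H.Eq.5.4}--\eqref{H.Eq.5.5}.
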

\begin{proof}
By making variable transformations we have that
\begin{align}\label{H.Eq.5.4}
\begin{split}
\int_{\mathbb{G}} \left[\exp{\left(\beta |B(0, |x|)|^{\beta} \int_{\mathbb{G}\setminus B(0, |x|)}|B(0, |y|)|^{-(\beta+1)}\ln f(y)\mathrm{d}y\right)}\right]^q u(x)\mathrm{d}x\\
=\int_{\mathbb{G}} \left[\exp{\left(\dfrac{\beta}{|B(0, |x|)|^{\beta}}\int_{B(0, |x|)}|B(0, |y|)|^{\beta-1}\ln g(y)\mathrm{d}y\right)}\right]^q \widetilde{u}(x)\mathrm{d}x
\end{split}
\end{align}
and 
\begin{align}\label{H.Eq.5.5}
\int_{\mathbb{G}} f^p(x)v(x)\mathrm{d}x=\int_{\mathbb{G}} g^p(x)\widetilde{v}(x)\mathrm{d}x,
\end{align}
where $g(ry)=f\left(\frac{1}{r}y\right)$ for $r>0$ with $\widetilde{u}(x)$ and $\widetilde{v}(x)$ are defined by \eqref{H.Eq.5.2.2}. It follows from \eqref{H.Eq.5.4} and \eqref{H.Eq.5.5} that, the inequality \eqref{H.Eq.5.1} is equivalent to 
\begin{align}\label{H.Eq.5.6}
\begin{split}
\left(\int_{\mathbb{G}} \left[\exp{\left(\dfrac{\beta}{|B(0, |x|)|^{\beta}} \int_{B(0, |x|)}|B(0, |y|)|^{\beta-1}\ln g(y)\mathrm{d}y\right)}\right]^q \widetilde{u}(x)\mathrm{d}x\right)^{\frac{1}{q}}\\
\leq C \left(\int_{\mathbb{G}} g^p(x)\widetilde{v}(x)\mathrm{d}x\right)^{\frac{1}{p}},
\end{split}
\end{align}
where $\widetilde{u}(x)$ and $\widetilde{v}(x)$ are defined by \eqref{H.Eq.5.2.2}.

In view of Theorem \ref{H.T.4.1}, the inequality \eqref{H.Eq.5.6} holds for some finite $C$ if and only if for some $\alpha>0$,
\begin{align*}
\widetilde{A}(\alpha)=\sup\limits_{R>0}\,  R^\frac{{Q(\beta-1)+\alpha}}{p}\left(\int_{\mathbb{G}\setminus B(0, R)}   |x|^{-(Q\beta+\alpha)\frac{q}{p}} \widetilde{w}(x)\mathrm{d}x\right)^\frac{1}{q}<\infty,
\end{align*}
where $\widetilde{w}(x)$ is defined by \eqref{H.Eq.5.2.1} with $\widetilde{u}(x)$ and $\widetilde{v}(x)$ are defined by \eqref{H.Eq.5.2.2}. Since the inequality \eqref{H.Eq.5.1} is equivalent to \eqref{H.Eq.5.6} with $\widetilde{u}(x)$ and $\widetilde{v}(x)$ are defined by \eqref{H.Eq.5.2.2}, we can, by Theorem \ref{H.T.4.1}, conclude that \eqref{H.Eq.5.1} holds if and only if \eqref{H.Eq.5.2} - \eqref{H.Eq.5.2.2} holds.
Moreover, the sharp constant $C$ in \eqref{H.Eq.5.1} satisfies the estimates \eqref{H.Eq.5.3}. The proof is complete.
\end{proof}
Now let us prove the dual version of Levin-Cochran-Lee's type inequality in Theorem \ref{H.T.4.2}:
\begin{theorem}\label{H.T.5.2}
Let $\mathbb{G}$ be a homogeneous group with the homogeneous dimension $Q$ equipped with a quasi-norm $|\cdot|$, and let $a\in \mathbb{R}$ and $\beta>0$. Suppose that $u$ and $v$ are positive functions on $\mathbb{G}$. Then 
\begin{align}\label{H.Eq.5.9}
\begin{split}
    \int_{\mathbb{G}} \exp{\left(\beta |B(0, |x|)|^{\beta} \int_{\mathbb{G}\setminus B(0, |x|)}|B(0, |y|)|^{-(\beta+1)}\ln f(y)\mathrm{d}y\right)} |B(0, |x|)|^a\mathrm{d}x\\
    \leq e^{-(a+1)/\beta} \int_{\mathbb{G}} f(x)|B(0, |x|)|^a\mathrm{d}x
    \end{split}
\end{align}
holds for all positive functions $f$, and the constant $e^{-(a+1)/\beta}$ in \eqref{H.Eq.5.9} is sharp.
\end{theorem}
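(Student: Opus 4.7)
The plan is to derive Theorem \ref{H.T.5.2} as a direct corollary of Theorem \ref{H.T.4.2}, using the radial-inversion equivalence $g(ry)=f((1/r)y)$ already exploited in the proof of Theorem \ref{H.T.5.1}. The identities \eqref{H.Eq.5.4}--\eqref{H.Eq.5.5} show that \eqref{H.Eq.5.9} is equivalent to the inequality \eqref{H.Eq.4.19} with $u,v$ replaced by the weights $\widetilde u,\widetilde v$ of \eqref{H.Eq.5.2.2}, so the whole argument will reduce to computing $\widetilde u,\widetilde v$ explicitly and then applying Theorem \ref{H.T.4.2}.

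For the upper bound, substituting $u(x)=v(x)=|B(0,|x|)|^a=(|\mathfrak{S}|/Q)^a|x|^{Qa}$ into \eqref{H.Eq.5.2.2} gives at once
\[
\widetilde u(x)=\widetilde v(x)=(|\mathfrak{S}|/Q)^a|x|^{-Q(a+2)}=(|\mathfrak{S}|/Q)^{2a+2}|B(0,|x|)|^{-(a+2)}.
\]
Because the positive constant $(|\mathfrak{S}|/Q)^{2a+2}$ appears identically on both sides of the equivalent inequality \eqref{H.Eq.5.6}, it cancels, leaving the task of proving \eqref{H.Eq.4.19} with the power weight $|B(0,|x|)|^{a'}$ for $a':=-(a+2)$. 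Theorem \ref{H.T.4.2} then delivers precisely the sharp constant $e^{(a'+1)/\beta}=e^{-(a+1)/\beta}$, which is the constant claimed in \eqref{H.Eq.5.9}.

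For sharpness the same radial inversion transports the extremising test function of Theorem \ref{H.T.4.2} (for parameter $a'$) into one for Theorem \ref{H.T.5.2}. Starting from $h(x)=\chi_{[0,e^{1/(\beta Q)}]}(|x|)+|x|^{-\gamma}\chi_{(e^{1/(\beta Q)},\infty)}(|x|)$ with $\gamma>Q(a'+1)=-Q(a+1)$, the change of variables $f(ry)=h((1/r)y)$ yields
\[
f(x)=|x|^\gamma\chi_{[0,e^{-1/(\beta Q)})}(|x|)+\chi_{[e^{-1/(\beta Q)},\infty)}(|x|).
\]
Plugging this $f$ into \eqref{H.Eq.5.9} and letting $\gamma\to -Q(a+1)^+$ produces the matching lower bound $e^{-(a+1)/\beta}\le C$ by the same computation used in \eqref{H.Eq.4.21}--\eqref{H.Eq.4.23}.

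The main thing to watch is the bookkeeping of the multiplicative constants $(|\mathfrak{S}|/Q)^{2a+2}$ through the change of variables and their cancellation in the equivalent inequality; once this is checked the proof becomes a one-line appeal to Theorem \ref{H.T.4.2}. The convergence restriction $a'>-1$, i.e.\ $a<-1$, in the sharpness argument mirrors the corresponding restriction in Theorem \ref{H.T.4.2}; for $a>-1$ the inversion-dual test function $f(x)=\chi_{[0,e^{-1/(\beta Q)}]}(|x|)+|x|^\gamma\chi_{(e^{-1/(\beta Q)},\infty)}(|x|)$ with $\gamma\to -Q(a+1)^-$ serves the same purpose by a parallel direct computation.
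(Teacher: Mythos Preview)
Your proposal is correct and follows essentially the same route as the paper's own proof: reduce \eqref{H.Eq.5.9} via the radial inversion of Theorem~\ref{H.T.5.1} to the inequality of Theorem~\ref{H.T.4.2} with exponent $a'=-(a+2)$, which then yields the sharp constant $e^{(a'+1)/\beta}=e^{-(a+1)/\beta}$. You supply more detail than the paper (which simply writes down the equivalent inequality with weight $|B(0,|x|)|^{-(a+2)}$ and omits the remaining steps), in particular the explicit tracking of the factor $(|\mathfrak{S}|/Q)^{2a+2}$ and the transport of the extremising test function; this extra bookkeeping is correct and helpful but not a different method.
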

\begin{proof}
Using similar approach as we did in Theorem \ref{H.T.5.1}, we can show that \eqref{H.Eq.5.9} is equivalent to  
\begin{align*}
\begin{split}
\int_{\mathbb{G}} \exp{\left(\dfrac{\beta}{|B(0, |x|)|^{\beta}}  \int_{B(0, |x|)}|B(0, |y|)|^{\beta-1}\ln g(y)\mathrm{d}y\right)} |B(0, |x|)|^{-(a+2)}\,\mathrm{d}x\\
\leq e^{-(a+1)/\beta}\int_{\mathbb{G}} g(y) |B(0, |x|)|^{-(a+2)}\,\mathrm{d}x.
\end{split}
\end{align*}
The next steps are similar to the steps in the proof of Theorem \ref{H.T.4.2}, so we omit the details. 
\end{proof}
\begin{corollary}\label{H.C.3.4}
Let $\mathbb{G}$ be a homogeneous group with the homogeneous dimension $Q$ equipped with a quasi-norm $|\cdot|$, and let $\beta>0$, $a_i>0$ ($i=1,2,...,n$) and $k\in \mathbb{N}\cup\{0\}=:\mathbb{N}_0$. Then the inequality 
\begin{align}\label{H.Eq.4.23.1}
\begin{split}
    \int_{\mathbb{G}} \exp{\left(\dfrac{\beta}{|B(0, |x|)|^{\beta}} \int_{B(0, |x|)}|B(0, |y|)|^{\beta-1}\ln f(y)\mathrm{d}y\right)} \left(\sum\limits_{i=1}^n |B(0, |x|)|^{a_i}\right)^k\,\mathrm{d}x\\
    \leq \exp{\left(\dfrac{1+k(a_1+\cdots+a_n)}{\beta}\right)} \int_{\mathbb{G}} f(x)\left(\sum\limits_{i=1}^n |B(0, |x|)|^{a_i}\right)^k\,\mathrm{d}x,
    \end{split}
\end{align}
holds for all positive functions $f$. Concerning the sharp constant see our Proposition \ref{H.P.2}.
\end{corollary}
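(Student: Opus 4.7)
The plan is to reduce Corollary \ref{H.C.3.4} to the sharp inequality of Theorem \ref{H.T.4.2} by expanding the weight with the multinomial theorem and applying that theorem monomial by monomial.

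First, by the multinomial theorem,
\[
\left(\sum_{i=1}^{n} |B(0,|x|)|^{a_i}\right)^{k} = \sum_{k_1+\cdots+k_n=k}\binom{k}{k_1,\ldots,k_n}\, |B(0,|x|)|^{k_1 a_1+\cdots+k_n a_n}.
\]
I would substitute this identity on the left-hand side of \eqref{H.Eq.4.23.1}, pull the finite sum outside the integral by linearity, and apply Theorem \ref{H.T.4.2} with the real exponent $a := k_1 a_1+\cdots+k_n a_n$ to each of the resulting terms. This produces the bound
\[
\sum_{k_1+\cdots+k_n=k}\binom{k}{k_1,\ldots,k_n}\, \exp\!\left(\frac{1+k_1 a_1+\cdots+k_n a_n}{\beta}\right) \int_{\mathbb{G}} f(x)\,|B(0,|x|)|^{k_1 a_1+\cdots+k_n a_n}\,\mathrm{d}x.
\]

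Next I would uniformize the exponential constants. Because each $a_i>0$ and the multi-indices satisfy $k_i\ge 0$ with $\sum_{i} k_i = k$, we have $k_1 a_1+\cdots+k_n a_n \le k(a_1+\cdots+a_n)$; consequently, for every admissible multi-index, the exponential factor is majorised by $\exp((1+k(a_1+\cdots+a_n))/\beta)$. Pulling this uniform constant outside the sum, bringing the sum back inside the integral, and applying the multinomial theorem in reverse collapses the remaining weighted sum into $\bigl(\sum_i |B(0,|x|)|^{a_i}\bigr)^{k}$, which is exactly the right-hand side of \eqref{H.Eq.4.23.1}.

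There is essentially no obstacle in this argument: the interchange of a finite sum with the integral is free, and Theorem \ref{H.T.4.2} delivers the sharp exponential constant on each monomial weight. The only point that requires care is the uniform exponent bound, which relies solely on the hypothesis $a_i>0$. Note that the constant $e^{(1+k(a_1+\cdots+a_n))/\beta}$ obtained this way is generally not sharp, and accordingly the statement defers the sharp value to Proposition \ref{H.P.2}.
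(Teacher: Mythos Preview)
Your proposal is correct and follows essentially the same approach as the paper: expand the weight by the multinomial theorem, apply Theorem \ref{H.T.4.2} to each monomial with exponent $a=k_1a_1+\cdots+k_na_n$, use the elementary bound $k_1a_1+\cdots+k_na_n\le k(a_1+\cdots+a_n)$ to uniformize the constants, and then collapse the sum back via the multinomial theorem. Your remark that the resulting constant is generally not sharp matches the paper's deferral to Proposition \ref{H.P.2}.
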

\begin{proof}
In view of Theorem \ref{H.T.4.2} and by applying the multinomial theorem, we have that
\begin{align}\label{H.Eq.4.23.2}
 \int_{\mathbb{G}} \exp{\left(\dfrac{\beta}{|B(0, |x|)|^{\beta}}  \int_{B(0, |x|)}|B(0, |y|)|^{\beta-1}\ln f(y)\mathrm{d}y\right)} \left(\sum\limits_{i=1}^n |B(0, |x|)|^{a_i}\right)^k\mathrm{d}x\nonumber\\
 =\sum\limits_{\substack{\sum\limits_{i=1}^n m_i=k\\m_i\in \mathbb{N}_0}}  \begin{pmatrix}
k\\
m_1,...,m_n
\end{pmatrix}\times\nonumber\\\int_{\mathbb{G}} \exp{\left(\dfrac{\beta}{|B(0, |x|)|^{\beta}} \int_{B(0, |x|)}|B(0, |y|)|^{\beta-1}\ln f(y)\mathrm{d}y\right)}  |B(0, |x|)|^{\sum\limits_{i=1}^n a_im_i}\mathrm{d}x\nonumber\\
\leq \sum\limits_{\substack{\sum\limits_{i=1}^n m_i=k\\m_i\in \mathbb{N}_0}}  \begin{pmatrix}
k\\
m_1,...,m_n
\end{pmatrix} \exp{\left(\dfrac{1+a_1m_1+\cdots+a_nm_n}{\beta}\right)}\int_{\mathbb{G}} f(x) |B(0, |x|)|^{\sum\limits_{i=1}^n a_im_i}\mathrm{d}x.
\end{align}
Moreover,
\begin{align}\label{H.Eq.4.23.3}
\sum\limits_{i=1}^n a_im_i\leq k\left(\sum\limits_{i=1}^n a_i\right).
\end{align}
Therefore, \eqref{H.Eq.4.23.1} follows from \eqref{H.Eq.4.23.2}, \eqref{H.Eq.4.23.3}, and the multinomial theorem. 
\end{proof}

We conclude this section by pointing out the following inequality with different weight functions, which, in particular generalizes a result in \cite{YPA}:
\begin{corollary}\label{H.C.5.3}
Let $\mathbb{G}$ be a homogeneous group with the homogeneous dimension $Q$ equipped with a quasi-norm $|\cdot|$, and let $\beta>0$. Suppose that $u$ and $v$ are positive functions on $\mathbb{G}$. Then the inequality 
\begin{align}\label{H.Eq.4.24}
    \int_{\mathbb{G}} \exp{\left(\dfrac{\beta}{|B(0, |x|)|^{\beta}} \int_{B(0, |x|)}|B(0, |y|)|^{\beta-1}\ln f(y)\mathrm{d}y\right)} u(x)\,\mathrm{d}x
    \leq e^{1/\beta} \int_{\mathbb{G}} f(x)v(x)\,\mathrm{d}x
\end{align} 
holds for all positive functions $f$ provided that
\begin{align}\label{H.Eq.4.25}
u(x)= \exp{\left(\dfrac{\beta}{|B(0, |x|)|^{\beta}} \int_{B(0, |x|)} |B(0, |y|)|^{\beta-1}\ln v(y)\mathrm{d}y\right)}.
\end{align}
Moreover, the constant $e^{1/\beta}$ in \eqref{H.Eq.4.24} is sharp.
\end{corollary}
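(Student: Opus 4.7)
The plan is to derive Corollary \ref{H.C.5.3} as an immediate consequence of the power-weight sharp inequality of Theorem \ref{H.T.4.2} (with $a=0$) via a logarithmic substitution that absorbs the weight $v$. Writing
\begin{align*}
E_h(x):=\exp\left(\dfrac{\beta}{|B(0,|x|)|^{\beta}}\int_{B(0,|x|)}|B(0,|y|)|^{\beta-1}\ln h(y)\,\mathrm{d}y\right)
\end{align*}
for any positive function $h$ on $\mathbb{G}$, I would set $g:=fv$ so that $\ln f=\ln g-\ln v$, and use the multiplicativity $E_{fv}(x)=E_g(x)\cdot E_v(x)^{-1}$ of the exponential mean that follows from linearity of the integral inside. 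The defining identity \eqref{H.Eq.4.25} reads precisely $u(x)=E_v(x)$, so the factor $E_v(x)^{-1}$ produced by the substitution cancels $u(x)$ exactly. The left-hand side of \eqref{H.Eq.4.24} thereby collapses to $\int_{\mathbb{G}} E_g(x)\,\mathrm{d}x$, while the right-hand side becomes $e^{1/\beta}\int_{\mathbb{G}} g(x)\,\mathrm{d}x$.

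With this reduction in hand, the remaining inequality is exactly Theorem \ref{H.T.4.2} specialised to $a=0$, which delivers both the bound and the constant $e^{1/\beta}$. For the sharpness part I would argue by contradiction: if some smaller $C<e^{1/\beta}$ sufficed in \eqref{H.Eq.4.24} for a particular admissible pair $(u,v)$, then running the same substitution in reverse (i.e., taking $f:=g/v$ for an arbitrary positive $g$) would immediately produce the unweighted inequality of Theorem \ref{H.T.4.2} (with $a=0$) with the same constant $C$, contradicting the sharpness of $e^{1/\beta}$ established there.

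The main obstacle is essentially bookkeeping: one must verify that the factorisation $E_{fv}=E_g\cdot E_v^{-1}$ is valid pointwise for positive $f,v$, and that the substitution does not introduce spurious infinities (since both sides of \eqref{H.Eq.4.24} admit a value in $[0,\infty]$, and the admissibility $u=E_v$ guarantees the cancellation on pre-images of $\infty$ as well). These are routine consequences of the positivity hypotheses on $u,v,f$ together with \eqref{H.Eq.4.25}; no additional homogeneous-group machinery is needed beyond the polar decomposition already used in the proof of Theorem \ref{H.T.4.2}.
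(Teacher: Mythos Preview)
Your proposal is correct and follows essentially the same approach as the paper: both set $g:=fv$, use the relation \eqref{H.Eq.4.25} to reduce \eqref{H.Eq.4.24} to the unweighted case of Theorem \ref{H.T.4.2} with $a=0$, and deduce both the inequality and sharpness from that equivalence. One small slip: you write $E_{fv}(x)=E_g(x)\cdot E_v(x)^{-1}$, but since $g=fv$ this reads $E_g=E_g\cdot E_v^{-1}$; you mean $E_f(x)=E_g(x)\cdot E_v(x)^{-1}$, which is indeed what you use in the next sentence.
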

\begin{proof}
Let $g(x)=f(x)v(x)$. Then, the inequality \eqref{H.Eq.4.24} is equivalent to the inequality
\begin{align}\label{H.Eq.4.26}
\int_{\mathbb{G}} \exp{\left(\dfrac{\beta}{|B(0, |x|)|^{\beta}} \int_{B(0, |x|)}|B(0, |y|)|^{\beta-1}\ln g(y)\mathrm{d}y\right)}\,\mathrm{d}x
    \leq e^{1/\beta} \int_{\mathbb{G}} g(x)\,\mathrm{d}x,
\end{align}
provided that \eqref{H.Eq.4.25} holds. In view of Theorem \ref{H.T.4.2} with $a=0$, we have that indeed the inequality \eqref{H.Eq.4.26} holds and the constant $e^{1/\beta}$ is sharp. Therefore, from the equivalence of \eqref{H.Eq.4.24} and \eqref{H.Eq.4.26}, we can conclude that \eqref{H.Eq.4.24} holds and the constant $e^{1/\beta}$ in \eqref{H.Eq.4.24} is sharp. The proof is complete.
\end{proof}
\section{Final Remarks}
Here, we give one example as an application of Corollary \ref{H.C.5.3} for another type of weight functions $u(x)=\exp{\left(\dfrac{\eta}{1+\gamma/\beta Q}|x|^{\gamma}\right)}$ and $v(x)=\exp{(\eta |x|^{\gamma})}$:
\begin{example}
Let $\mathbb{G}$ be a homogeneous group with the homogeneous dimension $Q$ equipped with a quasi-norm $|\cdot|$, and let $\beta>0$, $\eta, \gamma\in \mathbb{R}$. Suppose that $f$ is a positive function on $\mathbb{G}$. Then, the following inequality 
\begin{align*}
\begin{split}
    \int_{\mathbb{G}} \exp{\left(\dfrac{\beta}{|B(0, |x|)|^{\beta}} \int_{B(0, |x|)}|B(0, |y|)|^{\beta-1}\ln f(y)\mathrm{d}y\right)} \exp{\left(\dfrac{\eta}{1+\gamma/\beta Q}|x|^{\gamma}\right)}\,\mathrm{d}x\\
    \leq e^{1/\beta} \int_{\mathbb{G}} f(x)\exp{(\eta |x|^{\gamma})}\,\mathrm{d}x
    \end{split}
\end{align*} 
holds and the constant $e^{1/\beta}$ is sharp.
\end{example}

\begin{proposition}\label{H.P.1}
Let $\mathbb{G}$ be a homogeneous group with the homogeneous dimension $Q$ equipped with a quasi-norm $|\cdot|$, and let $0<p\leq q<\infty$ and $\beta>0$. Suppose that $u, v$ and $f$ are positive functions on $\mathbb{G}$. Then 
\begin{align}\label{H.Eq.6.1}
\begin{split}
    \left(\int_{\mathbb{G}} \left[\exp{\left(\dfrac{\beta}{|B(0, |x|)|^{\beta}} \int_{B(0, |x|)}|B(0, |y|)|^{\beta-1}\ln f(y)\mathrm{d}y\right)}\right]^q u(x)\mathrm{d}x\right)^{\frac{1}{q}}\\
    \leq C \left(\int_{\mathbb{G}} f^p(x)v(x)\mathrm{d}x\right)^{\frac{1}{p}}
    \end{split}
\end{align}
is equivalent to the inequality
\begin{align}\label{H.Eq.6.2}
    \left(\int_{\mathbb{G}} \left[\exp{\left(\dfrac{1}{|B(0, |x|)|}\int_{B(0, |x|)}\ln g(y)\mathrm{d}y\right)}\right]^q u_\beta(x)\mathrm{d}x\right)^{\frac{1}{q}}
    \leq C \left(\int_{\mathbb{G}} g^p(x)v_\beta(x)\mathrm{d}x\right)^{\frac{1}{p}},
\end{align}
for finite constant $C>0$, where
\begin{align}\label{H.Eq.6.3}
u_\beta(ry)=\dfrac{|ry|^{\frac{1-\beta}{\beta}}}{\beta}u(r^\frac{1}{\beta}y),\,\, v_\beta(ry)=\dfrac{|ry|^{\frac{1-\beta}{\beta}}}{\beta}v(r^\frac{1}{\beta}y)\, \text{ and } g(ry)=f(r^\frac{1}{\beta}y), 
\end{align}
for $r>0$. Moreover, the equivalence of \eqref{H.Eq.6.1} and \eqref{H.Eq.6.2} holds with the same constant $C>0$.
\end{proposition}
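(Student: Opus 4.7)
The plan is to derive the equivalence by a single radial substitution $s=r^\beta$ in polar coordinates, applied both to the inner integral (the ``exp'' expression) and to the outer $\mathbb{G}$-integral. First I would rewrite the inner integral in polar form via \eqref{H.I.Eq.3}, obtaining
$$\int_{B(0,|x|)}|B(0,|y|)|^{\beta-1}\ln f(y)\,dy = \Bigl(\tfrac{|\mathfrak{S}|}{Q}\Bigr)^{\beta-1}\int_0^{|x|}\!\int_{\mathfrak{S}} r^{Q\beta-1}\ln f(ry)\,d\sigma(y)\,dr.$$
Setting $s=r^\beta$, so that $r=s^{1/\beta}$ and $dr=\tfrac{1}{\beta}s^{(1-\beta)/\beta}\,ds$, the integrand simplifies to $\tfrac{1}{\beta}s^{Q-1}\ln g(sy)$, where $g(sy):=f(s^{1/\beta}y)$ as specified in \eqref{H.Eq.6.3}. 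Folding back via polar decomposition gives $\tfrac{1}{\beta}(|\mathfrak{S}|/Q)^{\beta-1}\int_{B(0,|x|^\beta)}\ln g(z)\,dz$. After multiplying by $\beta/|B(0,|x|)|^\beta$ the numerical constants collapse (since $|B(0,|x|)|^\beta=(|\mathfrak{S}|/Q)^\beta|x|^{Q\beta}$ and $|B(0,|x|^\beta)|=(|\mathfrak{S}|/Q)|x|^{Q\beta}$), yielding the key identity
$$\frac{\beta}{|B(0,|x|)|^\beta}\int_{B(0,|x|)}|B(0,|y|)|^{\beta-1}\ln f(y)\,dy = \frac{1}{|B(0,|x|^\beta)|}\int_{B(0,|x|^\beta)}\ln g(z)\,dz.$$
This converts the $\beta$-weighted geometric-mean kernel of \eqref{H.Eq.6.1} into the standard Pólya--Knopp-type kernel of \eqref{H.Eq.6.2}, but evaluated at a point whose quasi-norm equals $|x|^\beta$.

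Next I would carry out the matching substitution in the outer integrals. Writing $\int_{\mathbb{G}}F(x)u(x)\,dx$ in polar form and substituting $s=r^\beta$, one finds $r^{Q-1}\,dr=\tfrac{1}{\beta}s^{Q/\beta-1}\,ds$, which I rearrange as $\tfrac{1}{\beta}|s|^{Q(1-\beta)/\beta}\cdot s^{Q-1}\,ds$. Regrouping factors and recognising the quasi-norm $|sy|=s$ for $y\in\mathfrak{S}$, the outer integral becomes $\int_{\mathbb{G}}\widetilde{F}(z)u_\beta(z)\,dz$, where $\widetilde{F}(sy)=F(s^{1/\beta}y)$ and $u_\beta$ is precisely the weight defined by \eqref{H.Eq.6.3} (modulo an apparent typographical shortening of the exponent). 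The identical computation applied to $v$ gives the corresponding transformation on the right-hand side of \eqref{H.Eq.6.1} with the weight $v_\beta$ and the function $g$.

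Applying both transformations to each side of \eqref{H.Eq.6.1} converts it verbatim to \eqref{H.Eq.6.2} with the \emph{same} constant $C$, because no constants have been introduced or absorbed, only a change of variables. Conversely, the correspondence $f(ry)\leftrightarrow g(sy)=f(s^{1/\beta}y)$ is a bijection between positive functions on $\mathbb{G}$ (with inverse $f(ry)=g(r^\beta y)$), so \eqref{H.Eq.6.2} implies \eqref{H.Eq.6.1} with the same constant by running the substitution backwards. This completes the proof of equivalence. The only nontrivial bookkeeping is tracking the exponent of $s$ carefully so that the extra factor coming from $r^{Q-1}\,dr$ is correctly absorbed into the weight $u_\beta$; once the substitution $s=r^\beta$ is fixed everywhere, the calculation is routine and presents no genuine obstacle.
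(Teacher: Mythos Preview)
Your proposal is correct and follows essentially the same approach as the paper: both arguments pass to polar coordinates via \eqref{H.I.Eq.3} and perform the radial change of variables $s=r^{\beta}$, first on the inner integral to convert the $\beta$-weighted logarithmic mean into the standard one over $B(0,|x|^{\beta})$, and then on the outer integral to produce the weights $u_\beta,v_\beta$. Your presentation is in fact slightly more careful---you correctly compute the Jacobian factor as $s^{Q(1-\beta)/\beta}$ and flag the apparent typo in the exponent of \eqref{H.Eq.6.3}, whereas the paper's displayed intermediate step \eqref{H.Eq.6.4} carries the same slip.
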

\begin{proof}
By making the variable transformations $s=r^\beta$, we find that the inequality \eqref{H.Eq.6.1} is equivalent to 
\begin{align}\label{H.Eq.6.4}
\begin{split}
\Big\{\int_o^\infty \int_{\mathfrak{S}} \left[\exp{\left(\dfrac{\beta Q^\beta}{|\mathfrak{S}|^\beta s^Q} \int_{B(0,s^\frac{1}{\beta})}|B(0, |y|)|^{\beta-1}\ln f(y)\mathrm{d}y\right)}\right]^q\times\\
\dfrac{s^{Q-1+\frac{1-\beta}{\beta}}}{\beta} u(s^{\frac{1}{\beta}}z)\mathrm{d}\sigma(z)\mathrm{d}s\Big\}^{\frac{1}{q}}\\
    \leq C \left(\int_0^\infty \int_{\mathfrak{S}} f^p(r^\frac{1}{\beta}y) \dfrac{r^{Q-1+\frac{1-\beta}{\beta}}}{\beta} v(r^{\frac{1}{\beta}}z)\mathrm{d}\sigma(z)\mathrm{d}r\right)^{\frac{1}{p}}.
    \end{split}
\end{align}
Moreover, by making similar variable transformations as above, we have
\begin{align}\label{H.Eq.6.5}
\int_{B(0,s^\frac{1}{\beta})}|B(0, |y|)|^{\beta-1}\ln f(y)\mathrm{d}y=\dfrac{1}{\beta}\left(\dfrac{|\mathfrak{S}|}{Q}\right)^{\beta-1}\int_{B(0,s)} \ln g(y)\,\mathrm{d}y,
\end{align}
where $g(y)$ is defined by \eqref{H.Eq.6.3}.

 Therefore, from \eqref{H.Eq.6.4} and \eqref{H.Eq.6.5}, we can conclude that the inequality \eqref{H.Eq.6.1} is equivalent to \eqref{H.Eq.6.2}, where $g, u_\beta$ and $v_\beta$ are defined by \eqref{H.Eq.6.3}. The proof is complete.
\end{proof}
\begin{proposition}\label{H.P.2}
The sharp constant $C$ in the inequality \eqref{H.Eq.4.23.1} satisfies
\begin{align}\label{H.Eq.6.6.1}
\exp{\left(\dfrac{1+ a_1m_1+\cdots+a_nm_n}{\beta}\right)}\leq C\leq \exp{\left(\dfrac{1+k(a_1+\cdots +a_n)}{\beta}\right)},
\end{align}
for at least one collection $\{m_1, ..., m_n\}\subset \mathbb{N}_0$ such that $m_1+\cdots+m_n=k$.
\end{proposition}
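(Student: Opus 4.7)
My plan is to prove the two inequalities in \eqref{H.Eq.6.6.1} separately. The upper bound $C \leq \exp\!\left(\dfrac{1+k(a_1+\cdots+a_n)}{\beta}\right)$ is essentially already contained in the proof of Corollary~\ref{H.C.3.4}: inside the multinomial expansion \eqref{H.Eq.4.23.2}, every exponent $1 + a_1m_1 + \cdots + a_nm_n$ is bounded above by $1 + k(a_1+\cdots+a_n)$, since $\sum m_i = k$ and $a_i > 0$. Pulling this uniform constant out of the sum and then reassembling the multinomial yields precisely the right hand side of \eqref{H.Eq.4.23.1} with that constant in front; I would just make this extraction explicit.

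For the lower bound, I would test \eqref{H.Eq.4.23.1} against the family used in the proof of Theorem~\ref{H.T.4.2}, namely
\[
f_\gamma(x) = \chi_{[\,0,\, e^{1/(\beta Q)}\,]}(|x|) + |x|^{-\gamma}\chi_{(e^{1/(\beta Q)},\,\infty)}(|x|),
\]
with $\gamma$ a parameter to be tuned. Expanding the weight $\left(\sum_{i=1}^n |B(0,|x|)|^{a_i}\right)^k$ by the multinomial theorem turns each side of \eqref{H.Eq.4.23.1} into a finite sum indexed by $\vec{m}=(m_1,\dots,m_n)$ with $\sum m_i = k$. The explicit computations \eqref{H.Eq.4.21} and \eqref{H.Eq.4.22} then give every summand in closed form for each exponent $a_{\vec{m}}:=a_1m_1+\cdots+a_nm_n$, provided $\gamma/Q > a_{\vec{m}} + 1$. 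Setting $A := \max_{\vec{m}} a_{\vec{m}} = k\max_i a_i$, it is enough to require $\gamma/Q > A+1$.

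Next I would let $\gamma/Q \to (A+1)^+$ and isolate the dominant behavior. Only the $\vec{m}$ with $a_{\vec{m}} = A$ contribute singularly, since their denominator $\gamma/Q - (a_{\vec{m}}+1)$ blows up while every other denominator stays bounded away from zero. The leading parts of the left and right hand sides share identical combinatorial prefactors and differ only by the factor $e^{-\gamma/(\beta Q)} \to e^{-(A+1)/\beta}$ appearing on the right, so the ratio $\mathrm{LHS}/\mathrm{RHS}$ converges to $e^{(A+1)/\beta}$. Thus $C \geq e^{(1+A)/\beta}$, and choosing $\vec{m}^{\,*}$ to place all of $k$ at a coordinate where $a_i=\max_j a_j$ produces a collection $\{m_1^{\,*},\dots,m_n^{\,*}\}$ with $a_1 m_1^{\,*} + \cdots + a_n m_n^{\,*} = A$, which is the required lower bound for at least one such $\vec{m}$.

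The main technical point will be the asymptotic isolation of the dominant multi-index as $\gamma/Q \to (A+1)^+$: I must verify that the contributions from all $\vec{m}$ with $a_{\vec{m}} < A$ stay bounded and do not conspire to cancel the singular leading contribution. This is a careful but essentially routine bookkeeping task; once it is done, both bounds on the sharp constant $C$ follow directly from the closed-form integrals already computed in Theorem~\ref{H.T.4.2}.
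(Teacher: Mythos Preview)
Your proposal is correct and uses the same test function $f_\gamma$ as the paper, but your extraction of the lower bound differs from the paper's in a way worth noting. The paper expands both sides by the multinomial theorem and then invokes a pigeonhole argument: since $\sum_{\vec m} L_{\vec m}(\gamma)\le C\sum_{\vec m} R_{\vec m}(\gamma)$, there must exist \emph{some} multi-index $\vec m(\gamma)$ with $L_{\vec m(\gamma)}(\gamma)\le C\,R_{\vec m(\gamma)}(\gamma)$; it then lets $\gamma/Q\to(1+a_{\vec m})^+$ for that tuple. This leaves the subtlety that the selected tuple may depend on $\gamma$, so the limit has to be justified along a subsequence on which the tuple is fixed, and one does not a priori control where that subsequence lands.

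Your route sidesteps this entirely: you send $\gamma/Q\to (1+A)^+$ with $A=k\max_i a_i$ and observe that on \emph{both} sides only the terms with $a_{\vec m}=A$ are singular, all other terms staying bounded. Since the singular parts on the two sides share the same multinomial prefactors and differ only by the factor $e^{-\gamma/(\beta Q)}$, the ratio $\mathrm{LHS}/\mathrm{RHS}\to e^{(1+A)/\beta}$ and hence $C\ge e^{(1+A)/\beta}$. This is cleaner, and it also pins down \emph{which} collection realizes the lower bound (namely any $\vec m^{\,*}$ concentrating all of $k$ on an index achieving $\max_i a_i$), giving in fact the strongest lower bound available among all admissible tuples, whereas the paper only asserts existence of ``at least one'' such collection. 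The bookkeeping you flag---that the subleading terms with $a_{\vec m}<A$ remain uniformly bounded as $\gamma/Q\to(1+A)^+$---is indeed routine, since for those terms the denominator $\gamma/Q-(1+a_{\vec m})$ stays bounded away from zero.
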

\begin{proof}
In view of Corollary \ref{H.C.3.4}, we have that
\begin{align}\label{H.Eq.6.7}
C\leq \exp{\left(\dfrac{1+k(a_1+\cdots +a_n)}{\beta}\right)}.
\end{align}
It only remains to prove that
\begin{align}\label{H.Eq.6.8}
\exp{\left(\dfrac{1+ a_1m_1+\cdots+a_nm_n}{\beta}\right)}\leq C.
\end{align}
Consider the function
\begin{align*}
f(x)= \chi_{[ 0,\,e^{\frac{1}{\beta Q}} ]}(|x|)+|x|^{-\gamma}\chi_{(e^{\frac{1}{\beta Q}},\,\infty)}(|x|),
\end{align*}
where $\gamma>Q\left(1+a\right)$ with $a:=\sum\limits_{i=1}^n m_ia_i$ for arbitrary  collection $\{m_1, ..., m_n\}\subset \mathbb{N}_0$ such that $k=\sum\limits_{i=1}^n m_i$. Then, from \eqref{H.Eq.4.23.1}, we have
\begin{align*}
\begin{split}
\sum\limits_{\substack{\sum\limits_{i=1}^n m_i=k\\m_i\in \mathbb{N}_0}}  \begin{pmatrix}
k\\
m_1,...,m_n
\end{pmatrix} |B(0, 1)|^{1+a}\exp{\left(\dfrac{1+a}{\beta}\right)}\left(\dfrac{1}{1+a}+\dfrac{1}{\gamma/Q-\left(1+a\right)}\right)\\
\leq C \sum\limits_{\substack{\sum\limits_{i=1}^n m_i=k\\m_i\in \mathbb{N}_0}}  \begin{pmatrix}
k\\
m_1,...,m_n
\end{pmatrix} |B(0, 1)|^{1+a}\exp{\left(\dfrac{1+a}{\beta}\right)}\left(\dfrac{1}{1+a}+\dfrac{e^{-\gamma/\beta Q}}{\gamma/Q-\left(1+a\right)}\right)<\infty.
\end{split}
\end{align*}
Consequently, there exists a collection $\{m_1, ..., m_n\}\subseteq \mathbb{N}_0$ with $\sum\limits_{i=1}^n m_i=k$ such that
\begin{align*}
\left(\dfrac{1}{1+a}+\dfrac{1}{\gamma/Q-\left(1+a\right)}\right)\leq C \left(\dfrac{1}{1+a}+\dfrac{e^{-\gamma/\beta Q}}{\gamma/Q-\left(1+a\right)}\right).
\end{align*}
Then, the above inequality implies that
\begin{align*}
\dfrac{e^{\gamma/\beta Q}}{\left(\frac{1+a}{\gamma/Q}\right)+\left(1-\frac{1+a}{\gamma/Q}\right)e^{\gamma/\beta Q}}\leq C.
\end{align*}
By letting $\gamma/Q\to (1+a)^+$, we showed that \eqref{H.Eq.6.8} holds. Therefore, the inequality \eqref{H.Eq.6.6.1} follows from \eqref{H.Eq.6.7} and \eqref{H.Eq.6.8}. The proof is complete.
\end{proof}

\subsubsection*{Acknowledgements}
The research in this paper was initiated and carried out with the support of Ghent Analysis \& PDE Centre at Ghent University, Belgium, when the second author came for a long-term research visit. The second author is very grateful to the centre for the support and warm hospitality during his research visit. 

\subsection*{Funding}
The first author was supported by the FWO Odysseus 1 grant no. G.0H94.18N: Analysis and Partial Differential Equations, by the Methusalem programme of the Ghent University Special Research Fund (BOF) (grant no. 01M01021) and by the EPSRC (grants no. EP/R003025/2 and EP/V005529/1).
\subsection*{Authors' contributions}
All authors contributed equally to this paper and approved the final manuscript.

\subsection*{Data availability}
Not applicable
\section*{Declarations}

\subsection*{Competing interests}
The authors declare that they have no competing interests.


\end{document}